\def\lsim{\mathrel{\rlap{\lower4pt\hbox{\hskip1pt$\sim$}}
    \raise1pt\hbox{$<$}}}                
\newcommand{\hreff}[2]{#2}
\newcommand{\arxiv}[1]{\hreff{http://www.arXiv.org/abs/#1}{arXiv:#1}}
\newtheorem{remark}[theorem]{Remark}
\begin{document}
\author{Dario A. Bini
\thanks{Dipartimento di Matematica, Universit\`a di Pisa, Largo Bruno Pontecorvo 5, 56127 Pisa ({\tt bini@dm.unipi.it})}
\and 
Vanni Noferini
\thanks{Dipartimento di Matematica, Universit\`a di Pisa, Largo Bruno Pontecorvo 5, 56127 Pisa ({\tt noferini@mail.dm.unipi.it})}
\and 
Meisam Sharify
\thanks{INRIA Saclay-\^Ile-de-France and LRI, B\^at 650 Universit\'e Paris-Sud 11, 
91405 Orsay Cedex 
France ({\tt Meisam.Sharify@inria.fr})}
}

\title{Locating the eigenvalues of matrix polynomials}

\maketitle

\begin{abstract}
  Some known results for locating the roots of polynomials are
  extended to the case of matrix polynomials. In particular, a theorem
  by A.E.~Pellet [Bulletin des Sciences Math\'ematiques, (2), vol 5
  (1881), pp.393-395], some results of D.A.~Bini [Numer. Algorithms
  13:179-200, 1996] based on the Newton polygon technique, and 
  recent results of M. Akian, S. Gaubert and M. Sharify (see in particular
[LNCIS, 389, Springer p.p.291-303] and [M. Sharify, Ph.D. thesis, \'Ecole Polytechnique,
  ParisTech, 2011]).  These extensions are applied
  for determining effective initial approximations for the numerical
  computation of the eigenvalues of matrix polynomials by means of
  simultaneous iterations, like the Ehrlich-Aberth method.  Numerical
  experiments that show the computational advantage of these results
  are presented.
\bigskip

\noindent {\sl AMS classification: 15A22,15A80,15A18,47J10.}  

\noindent
\end{abstract}

{\bf Keywords:}
  Polynomial eigenvalue problems, matrix polynomials, tropical algebra,
  location of roots, Rouch\'{e} theorem, Newton's polygon.
\\

\section{Introduction}
Consider a square matrix polynomial
$A(x)=\sum_{i=0}^n A_ix^i$, where $A_i$ are $m\times m$ matrices with
complex entries and assume that $A(x)$ is regular, i.e., $a(x):=\det
A(x)$ is not identically zero. We recall that the roots of $a(x)$
coincide with the eigenvalues of the matrix polynomial $A(x)$ that is,
the complex values $\lambda$ for which there exists a nonzero vector
$v$ such that $A(\lambda)v=0$.  Computing the eigenvalues of a matrix
polynomials, known as polynomial eigenvalue problem, has recently
received much attention \cite{mehr}.

In this paper we extend to the case of matrix polynomials some known
bounds valid for the moduli of the roots of scalar polynomials like
the Pellet theorem \cite{pellet,walsh}, the Newton polygon
construction used in \cite{bini96}, applied in \cite{bf00} and
implemented in the package MPSolve
(\url{http://en.wikipedia.org/wiki/MPSolve}) for computing polynomial
roots to any guaranteed precision. We also extend a recent result by
S. Gaubert and M. Sharify~\cite{scalar2012,sha} who shed more light on
why the Newton polygon technique is so effective. Our results improve
some of the upper and lower bounds to the moduli of the eigenvalues of
a matrix polynomial given by N.~Higham and F.~Tisseur in \cite[Lemma 3.1]{ht01}.

\subsection{Motivation}
In the design of numerical algorithms for the simultaneous
approximation of the roots of a polynomial $a(x)=\sum_{i=0}^n x^i a_i$
with complex coefficients, it is crucial to have some effective
criterion to select a good set of starting values. In fact,
the performance of methods like the
Ehrlich-Aberth iteration \cite{aberth,ehrlich} or the Durand-Kerner
\cite{durand,kerner} algorithm, is strongly influenced by the choice
of the initial approximations \cite{bini96}. A standard approach, followed in
\cite{aberth,init}, is to consider $n$ values uniformly placed along a
circle of center 0 and radius $r$, say $r=1$. This choice is effective
only if the roots have moduli of the same order of magnitude.
If there are roots which differ much in modulus, then
this policy is not convenient since the number of iterations needed to
arrive at numerical convergence might become extremely large.

In \cite{bini96} a technique has been introduced, based on a theorem
by A.E. Pellet \cite{pellet,walsh},
\cite{ostro} and on the computation of the Newton
polygon, which allows one to strongly reduce the number of iterations
of the Ehrlich-Aberth method. This technique has been applied in
\cite{bf00} and implemented in the package MPSolve 
(\url{http://en.wikipedia.org/wiki/MPSolve}).
This package,
which computes guaranteed approximations to any desired precision to all the
roots of a given polynomial, takes great advantage from the Newton polygon
construction
and is one of the fastest software tools available for polynomial root-finding.

Let us introduce  the following notation to denote an annulus
 centered at the origin of the complex plane:
\begin{gather}\label{defannuli}
 \mathcal{A}(a,b):=\{x \in \mathbb{C}, \ \ a \leq |x| \leq b \},
\end{gather}
where $0<a<b$.

The theorem by A.E.~Pellet, integrated by the results of \cite{walsh}
and \cite{bini96}, states the following property.

\begin{theorem}\label{walsh}
  Given the polynomial $w(x)=\sum_{i=0}^n w_ix^i$ with $w_0,w_n\ne 0$,
  the equation
  \begin{equation}\label{uno}
    w_\kappa x^\kappa=\sum_{i=0,\, i\ne \kappa}^n |w_i|x^i
  \end{equation}
  has one real positive solution $t_0$ if $\kappa=0$, one real positive
  solution $s_n$ if $\kappa=n$, and either 0 or 2 positive real solutions
  $s_\kappa\le t_\kappa$ if $0<\kappa<n$.  Moreover, any polynomial
  $b(x)=\sum_{i=0}^n b_ix^i$ such that $|b_i|=|w_i|$, for $i=0,\ldots,
  n$, has no roots of modulus less than $t_0$, no roots of modulus
  greater than $s_n$, and no roots in the inner part of the annulus 
  $\mathcal A(s_\kappa,t_\kappa)$ if $s_\kappa$ and $t_\kappa$ exist. Furthermore, denoting
  $0=h_0<h_1<\ldots <h_p=n$ the values of $\kappa$ for which equation
  \eqref{uno} has two real positive solutions, then the number of
  roots of $b(x)$ in the closed annulus $\mathcal A(t_{h_{i-1}},s_{h_i})$
  is exactly $h_i-h_{i-1}$, for $i=1,\ldots,p$.
\end{theorem}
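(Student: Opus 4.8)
The plan is to split the statement into a real-variable part---the structure of the solutions of \eqref{uno}---and a complex-variable part---the location and the number of the zeros of $b$---and to obtain the second from Rouch\'e's theorem. Since only the moduli $|w_i|$ and $|b_i|$ enter, I may assume $w_i\ge0$ throughout. Dividing \eqref{uno} by $x^\kappa$, it is convenient to put, for $x>0$,
\[
\phi_\kappa(x):=\sum_{i<\kappa}w_ix^{i-\kappa}+\sum_{i>\kappa}w_ix^{i-\kappa},
\]
so that \eqref{uno} is the equation $\phi_\kappa(x)=w_\kappa$.

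First I would study $\phi_\kappa$ on $(0,\infty)$. Each summand $w_ix^{i-\kappa}$ has a nonzero integer exponent and is therefore convex; for $0<\kappa<n$ the summand $w_0x^{-\kappa}$ (present because $w_0\neq0$) is strictly convex, so $\phi_\kappa$ is strictly convex, while $\phi_\kappa(x)\to+\infty$ as $x\to0^+$ (from $w_0x^{-\kappa}$) and as $x\to+\infty$ (from $w_nx^{n-\kappa}$). A strictly convex function with these two limits attains its infimum at a single point, so $\phi_\kappa(x)=w_\kappa$ has $0$, $1$, or $2$ solutions depending on whether $w_\kappa$ lies below, at, or above that infimum; the middle case is the coalescence $s_\kappa=t_\kappa$, and in every case $\{x>0:\phi_\kappa(x)<w_\kappa\}$ is the open interval $(s_\kappa,t_\kappa)$, which is empty exactly when the two solutions do not exist or coincide. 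For $\kappa=0$ one has $\phi_0(x)=\sum_{i\ge1}w_ix^i$, strictly increasing from $0$ to $+\infty$, hence a unique $t_0$ with $\{\phi_0<w_0\}=(0,t_0)$; symmetrically $\phi_n$ is strictly decreasing from $+\infty$ to $0$, hence a unique $s_n$ with $\{\phi_n<w_n\}=(s_n,\infty)$. I also set $s_0:=0$ and $t_n:=+\infty$ for uniformity.

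Next, the root-free regions. If $|x|=\rho$ and $\phi_\kappa(\rho)<w_\kappa$, then
\[
|b_\kappa x^\kappa|=w_\kappa\rho^\kappa>\sum_{i\neq\kappa}w_i\rho^i=\sum_{i\neq\kappa}|b_ix^i|\ge|b(x)-b_\kappa x^\kappa|,
\]
so $b(x)\neq0$; letting $\rho$ run over the three intervals above, $b$ has no zero with $|x|<t_0$, none with $|x|>s_n$, and none in the open annulus $s_\kappa<|x|<t_\kappa$, which is the inner part of $\mathcal A(s_\kappa,t_\kappa)$. The same strict domination, used in the full Rouch\'e theorem, shows more: on any circle $|x|=\rho$ with $\phi_\kappa(\rho)<w_\kappa$ the polynomial $b$ has exactly as many zeros (with multiplicity) in the disk $|x|<\rho$ as $b_\kappa x^\kappa$ does, namely $\kappa$.

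Finally I would count. With $0=h_0<h_1<\dots<h_p=n$ as in the statement, choose $\rho\in(s_{h_{i-1}},t_{h_{i-1}})$ and $\rho'\in(s_{h_i},t_{h_i})$; the disks $|x|<\rho$ and $|x|<\rho'$ then contain $h_{i-1}$ and $h_i>h_{i-1}$ zeros, and since the number of zeros in a disk is nondecreasing in its radius we get $\rho<\rho'$, hence $t_{h_{i-1}}\le s_{h_i}$ and $\mathcal A(t_{h_{i-1}},s_{h_i})$ is a genuine annulus. Because $b$ has no zero in any gap $s_{h_j}<|x|<t_{h_j}$ nor on the circles $|x|=\rho,\rho'$, the number of zeros with $|x|<t_{h_{i-1}}$ equals $h_{i-1}$ and the number with $|x|\le s_{h_i}$ equals $h_i$; subtracting, the closed annulus $\mathcal A(t_{h_{i-1}},s_{h_i})$ contains exactly $h_i-h_{i-1}$ zeros, and the telescoping identity $\sum_{i=1}^p(h_i-h_{i-1})=n$ confirms that all zeros are thereby accounted for. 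I expect this last paragraph to be the only delicate point: one has to keep track of zeros lying exactly on a bounding circle $|x|=t_{h_{i-1}}$ or $|x|=s_{h_i}$ and of the degenerate coalescence $s_\kappa=t_\kappa$, and the closed-annulus formulation in the statement is precisely what absorbs these boundary cases; everything else reduces to the convexity of the auxiliary functions $\phi_\kappa$ and one application of Rouch\'e's theorem.
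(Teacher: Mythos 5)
Your overall strategy --- isolate the dominant term $w_\kappa x^\kappa$, settle the $0/1/2$ structure of the positive solutions of \eqref{uno} by a real-variable argument, and then count zeros by Rouch\'e on circles $|x|=\rho$ along which $w_\kappa\rho^\kappa>\sum_{i\ne\kappa}|w_i|\rho^i$ --- is exactly the mechanism this paper relies on. (The paper does not reprove Theorem \ref{walsh}; it cites Pellet, Walsh and \cite{bini96}. But its proofs of the matrix analogues, Theorem \ref{thm1} and Corollary \ref{cor1}, are precisely this argument with the Gohberg--Sigal generalization of Rouch\'e in place of the classical theorem, and with the ``$0$ or $2$ solutions'' count quoted as Lemma \ref{lem2}, proved in \cite{bini96} by induction on $\kappa$.) Your convexity derivation of that count is a clean, self-contained substitute for the induction, and the telescoping/monotonicity bookkeeping for the annuli matches the paper's own deduction of Corollary \ref{cor1} from Theorem \ref{thm1}.

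The one genuine gap is the coalescent case $s_\kappa=t_\kappa$, which you flag and then dismiss by asserting that the closed-annulus formulation ``absorbs'' it; it does not. Your counting step needs, at each index $h_j$, a circle of radius $\rho$ with the \emph{strict} dominance inequality, i.e.\ a nonempty open interval $(s_{h_j},t_{h_j})$; when the two solutions coincide no such circle exists, so Rouch\'e cannot be invoked there. Worse, under the reading in which a double solution counts as ``two real positive solutions'' the conclusion itself can fail: for $w(x)=1+2x+x^2$ the equation at $\kappa=1$ has the double solution $s_1=t_1=1$, while $t_0=\sqrt{2}-1$ and $s_2=1+\sqrt{2}$; the polynomial $b(x)=(x+1)^2$ has $|b_i|=|w_i|$ and both roots of modulus $1$, so the closed annulus $\mathcal A(t_0,s_1)$ contains two roots rather than $h_1-h_0=1$. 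The theorem must therefore be read, as in the classical Pellet statement and in \cite{bini96}, with the interior indices $h_1,\dots,h_{p-1}$ being those $\kappa$ for which \eqref{uno} has two \emph{distinct} positive solutions (the conventions $s_{h_0}=0$, $t_{h_p}=\infty$ take care of $\kappa=0,n$). Under that reading every interval you use is nonempty and your argument goes through verbatim; state that restriction explicitly (or treat the tangent case separately) and the proof is complete.
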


The bounds provided by the Pellet theorem are strict since there exist
polynomials $w(x)$ with roots of modulus $s_{h_{i-1}}$ and $t_{h_i}$.
Moreover, there exists a converse version of this theorem, given by
J.L.~Walsh \cite{walsh,ostro}. 

The Newton polygon technique, as used in \cite{bini96}, works this
way.  Given $a(x)=\sum_{i=0}^n a_ix^i$, where $a_0,a_n\ne 0$, the
upper part of the convex hull of the discrete set $\{(i,\log|a_i|),~
i=0,\ldots,n\}$ is computed. Denoting $k_i$, $i=0,...,q$ the abscissas
of the vertices such that $k_0=0<k_1<\cdots<k_q=n$, the radii
\begin{equation}\label{eq:ri}
r_i=|a_{k_{i-1}}/a_{k_{i}}|^{1/(k_i-k_{i-1})},\quad i=1,\ldots,q
\end{equation}
are formed and
$m_i=k_i-k_{i-1}$ approximations are chosen in the circle of center 0 and
radius $r_i$ for $i=1,\ldots,q$. The integer $m_i$ is called {\em multiplicity}
of the radius $r_i$.
 Observe that the sequence $r_i$ is such that $r_1\le r_2
\le\cdots\le r_q$ and that $-\log r_i$ are the slopes of the segments forming the Newton polygon.
Figure \ref{fig1} shows the Newton
polygon for the polynomial $a(x)=x^5+10^6x^4+x^3+\frac
1{100}x^2+10^3x+1$.

\begin{figure}\label{fig1}
\centering
\pgfdeclareimage[width=8cm]{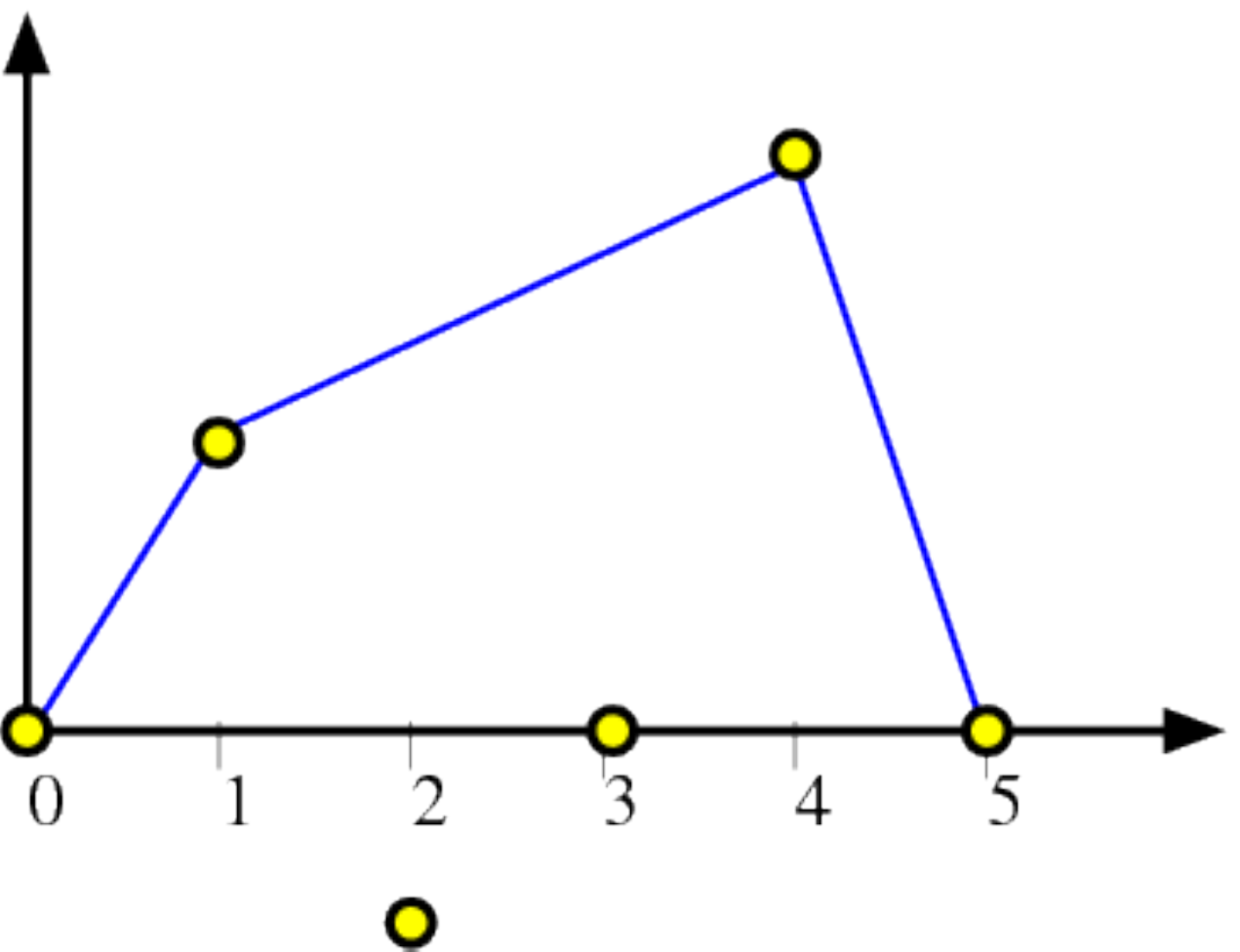}{newtpolyg}
\pgfuseimage{newtpolyg}
\caption{Newton's polygon for the polynomial
$a(x)=x^5+10^6x^4+x^3+\frac 1{100}x^2+10^3x+1$.}
\end{figure}

The effectiveness of this technique is explained in \cite{bini96}
where the following result is shown.

\begin{theorem}\label{numalgo}
Under the assumptions of Theorem \ref{walsh}, it holds $p\le q$,
\[
\{h_0,h_1,\ldots,h_p\}\subset\{k_0,k_1,\ldots,k_q\}.
\]
Moreover, $\{r_1,\ldots,r_q\}\cap ]s_{h_j},t_{h_j}[=\emptyset$ for
    $j=1,\ldots, p$.
\end{theorem}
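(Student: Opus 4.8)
The plan is to reduce both assertions to the sign behaviour of the Pellet functions. For $0<\kappa<n$ write $f_\kappa(t)=|w_\kappa|t^{\kappa}-\sum_{i\neq\kappa}|w_i|t^{i}$, and note that $t^{-\kappa}f_\kappa(t)=|w_\kappa|-\sum_{i\neq\kappa}|w_i|t^{i-\kappa}$ is concave on $(0,\infty)$ — each power $t^{i-\kappa}$ with $i\neq\kappa$ is convex — and tends to $-\infty$ as $t\to0^+$ (use $w_0\neq0$) and as $t\to+\infty$ (use $w_n\neq0$). Hence $\{t>0:f_\kappa(t)>0\}$ is an open bounded interval, which is precisely $]s_\kappa,t_\kappa[$ exactly when it is nonempty; for the extremal indices one gets a half-line, $]0,t_0[$ if $\kappa=0$ and $]s_n,+\infty[$ if $\kappa=n$. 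Thus $r_i\in\,]s_\kappa,t_\kappa[$ is equivalent to $f_\kappa(r_i)>0$, and the theorem follows once we (a) rule out the roots $s_\kappa,t_\kappa$ at non-vertex indices $\kappa$ and (b) prove $f_{h_j}(r_i)\le0$ for all $i,j$.

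For (a), hence for $\{h_0,\dots,h_p\}\subseteq\{k_0,\dots,k_q\}$, note that $h_0=0=k_0$ and $h_p=n=k_q$, so it suffices to take $0<\kappa<n$ that is \emph{not} a vertex of the Newton polygon and show $f_\kappa(t)<0$ for every $t>0$. Let $k_{i-1}<\kappa<k_i$ be the consecutive vertices bracketing $\kappa$ and write $\kappa=\alpha k_{i-1}+\beta k_i$ with $\alpha,\beta\in(0,1)$, $\alpha+\beta=1$. Since $(\kappa,\log|w_\kappa|)$ lies on or below the upper-hull edge through $(k_{i-1},\log|w_{k_{i-1}}|)$ and $(k_i,\log|w_{k_i}|)$, we get $|w_\kappa|\le|w_{k_{i-1}}|^{\alpha}|w_{k_i}|^{\beta}$, and then by the weighted AM--GM inequality, for every $t>0$,
$$|w_\kappa|t^{\kappa}\le\bigl(|w_{k_{i-1}}|t^{k_{i-1}}\bigr)^{\alpha}\bigl(|w_{k_i}|t^{k_i}\bigr)^{\beta}\le\alpha|w_{k_{i-1}}|t^{k_{i-1}}+\beta|w_{k_i}|t^{k_i}<|w_{k_{i-1}}|t^{k_{i-1}}+|w_{k_i}|t^{k_i},$$
the last step being strict because $\alpha,\beta<1$ and both terms are positive (vertex coefficients are nonzero). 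As $k_{i-1}\neq\kappa\neq k_i$, the right-hand side is a partial sum of $\sum_{i\neq\kappa}|w_i|t^{i}$, so $f_\kappa(t)<0$; hence \eqref{uno} has no positive root at $\kappa$, so $\kappa\notin\{h_1,\dots,h_{p-1}\}$. The inclusion follows, and $p\le q$ by cardinality.

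For (b) the key ingredient is the ``tropical'' inequality attached to each edge of the polygon: letting $\phi$ be the concave piecewise-linear upper-hull function ($\phi(\ell)\ge\log|w_\ell|$ for all $\ell$, with equality at the vertices), one has for every $i\in\{1,\dots,q\}$
$$|w_\ell|\,r_i^{\ell}\le M_i:=|w_{k_{i-1}}|\,r_i^{k_{i-1}}=|w_{k_i}|\,r_i^{k_i}\qquad(\ell=0,\dots,n),$$
with equality for $\ell\in\{k_{i-1},k_i\}$. Indeed $\ell\mapsto\phi(\ell)+\ell\log r_i$ is concave and attains its maximum $\log M_i$ precisely on the edge $[k_{i-1},k_i]$, whose slope is $-\log r_i$; combined with $\log|w_\ell|+\ell\log r_i\le\phi(\ell)+\ell\log r_i$ this gives the claim. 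Now fix $j$ and, by (a), write $h_j=k_m$. If $h_j$ is an endpoint of edge $i$ then $|w_{h_j}|r_i^{h_j}=M_i$ while the opposite endpoint of that edge contributes a further $M_i$ to $\sum_{\ell\neq h_j}|w_\ell|r_i^{\ell}$; if $h_j$ is not an endpoint of edge $i$ then $|w_{h_j}|r_i^{h_j}\le M_i<2M_i=|w_{k_{i-1}}|r_i^{k_{i-1}}+|w_{k_i}|r_i^{k_i}\le\sum_{\ell\neq h_j}|w_\ell|r_i^{\ell}$. Either way $f_{h_j}(r_i)\le0$, hence $r_i\notin\,]s_{h_j},t_{h_j}[$; the same bookkeeping covers $h_p=n$ (half-line $]s_n,+\infty[$) and, should one wish, $h_0=0$.

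The step I expect to be the real work is the tropical inequality of (b) — making precise the sense in which $r_i$ plays the role of a root of $w$ of that modulus — together with the care needed for Newton polygons with collinear data points, where a priori some $h_j$ might lie in the relative interior of an edge. Both are resolved by arguing with the upper-envelope function $\phi$ rather than with the vertices alone, and by the strict inequalities obtained in (a), which already forbid any $h_j$ from landing at a non-vertex. Everything else is elementary sign-counting and AM--GM.
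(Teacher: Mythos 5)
Your argument is correct: the concavity of $t^{-\kappa}f_\kappa(t)$ does identify the positivity set with $]s_\kappa,t_\kappa[$ (resp.\ the half-lines at $\kappa=0,n$), the hull log-concavity plus weighted AM--GM does force $f_\kappa<0$ at every non-vertex index, and the max-plus inequality $|w_\ell|r_i^\ell\le M_i$ with equality at the two endpoints of the $i$th edge does give $f_{h_j}(r_i)\le 0$, which is all the open-interval statement needs. However, this is a genuinely different route from the one the paper follows. The paper quotes the theorem from Bini's 1996 article and its supporting mechanism is the observation \eqref{eq:vert} together with the argument it later generalizes in Theorems \ref{th6}--\ref{th7}: evaluate the Pellet equality \eqref{uno} at its own roots and discard all terms but one, obtaining the bracketing $u_\kappa\le s_\kappa\le t_\kappa\le v_\kappa$ with $u_\kappa=\max_{j<\kappa}|w_j/w_\kappa|^{1/(\kappa-j)}$, $v_\kappa=\min_{j>\kappa}|w_j/w_\kappa|^{1/(\kappa-j)}$; the inclusion $\{h_j\}\subseteq\{k_i\}$ then comes from the vertex characterization $u_{k_i}\le v_{k_i}$ (non-vertices have $u_\kappa>v_\kappa$), and the disjointness from $v_{k_i}=u_{k_{i+1}}=r_{i+1}$ and the monotonicity of the radii, since $r_i\le u_{h_j}\le s_{h_j}$ for edges to the left of $h_j$ and $r_i\ge v_{h_j}\ge t_{h_j}$ for edges to the right. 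The trade-off: the single-term bracketing is exactly what survives the passage to matrix polynomials (it needs only $\|AB\|\le\|A\|\|B\|$ applied to $\|A_\kappa^{-1}A_i\|$, which is how Theorems \ref{th6} and \ref{th7} are proved), whereas your two-term comparison against the hull is intrinsically scalar -- it is in essence the tropical-roots viewpoint of Gaubert--Sharify, using that the moduli $|w_\ell|r_i^\ell$ are dominated by their value at the active edge. In exchange, your version is self-contained (it proves, rather than assumes, the fact that Pellet indices must be polygon vertices, including the collinear case) and delivers the slightly stronger strict inequality $f_\kappa(t)<0$ at all non-vertex indices.
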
 \ \smallskip

Therefore, the approximations chosen in the previously described way
lie in the union of the closed annuli $\mathcal
A(t_{h_{i-1}},s_{h_i})$, $i=1,\ldots, p$ which, according to the
Pellet theorem contain all the roots of all the polynomials having
coefficients with the same moduli of the coefficients of $a(x)$.
Moreover, the number of initial approximations chosen this way in each
annulus coincides with the number of roots that the polynomial has in
the same annulus.  The advantage of this approach is that the
computation of the Newton polygon and of the radii $r_i$ is almost
inexpensive since it requires $O(n)$ operations, while computing the
roots $s_{h_i}$ and $t_{h_i}$ is costly since it requires the solution
of several polynomial equations.  Recently, A. Melman \cite{melm} has
proposed a cheap algorithm for approximating $s_\kappa$ and $t_\kappa$.

In \cite{bini96} it is observed that any vertex $(k_i,\log|a_{k_i}|)$ 
of the Newton polygon satisfies the following property
\begin{equation}\label{eq:vert}
\begin{array}{l}
u_{k_i}\le v_{k_i},\quad v_{k_i}=u_{k_{i+1}}=r_{i+1},\quad i=0,\ldots,q-1,
\\
u_{k_i}:=\max_{j<k_i}|a_j/a_{k_i}|^{1/(k_i-j)},\quad 
v_{k_i}:=\min_{j>k_i}|a_j/a_{k_i}|^{1/(k_i-j)}.
\end{array}
\end{equation}

In certain cases, the radii $r_i$ given by the Newton polygon provide
approximations to the moduli of the roots which are better than the
ones given by the Pellet theorem. The closeness of the radii $r_i$ to
the moduli of the roots of $a(x)$, holds in particular when the radii
differ relatively much from each other, or, equivalently, when the
vertices of the Newton polygon are sufficiently sharp.

This has been recently proved by S. Gaubert and
M. Sharify~\cite{scalar2012,sha}.  
In fact, using the theory
of tropical polynomials, it turns out that the values $r_i$, for
$i=1,\ldots,q$ coincide with the so called tropical roots of $a(x)$,
and that the values $m_i=k_i-k_{i-1}$ are the multiplicities of the
tropical roots $r_i$, for $i=1,\ldots,q$. This fact is used in
\cite{scalar2012,sha} to prove the following interesting result.

\begin{theorem}\label{sha}
  If $r_{i-1}/r_{i},r_{i}/r_{i+1}<1/9$, for $i=1,\ldots,p$, then any
  polynomial $b(x)$ having coefficients with the same modulus of the
  corresponding coefficients of $a(x)$ has $k_i-k_{i-1}$ roots in the
  annulus $\mathcal{A}(r_i/3,3r_i)$.
\end{theorem}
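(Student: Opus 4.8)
The plan is to argue exactly as in the classical proof of Pellet's theorem, i.e.\ by Rouch\'e's theorem, but applied separately on the two circles $|x|=3r_i$ and $|x|=r_i/3$ that bound the annulus, with a single monomial of $b$ playing the role of the ``dominant part''. Since the hypotheses and the conclusion involve only the moduli $|b_j|=|a_j|$, every estimate below will be phrased in terms of the $|a_j|$, hence will hold uniformly over all admissible $b$. The number of roots of $b$ in the closed annulus $\mathcal A(r_i/3,3r_i)$ will then be obtained by subtracting the number of zeros of $b$ in the open disk $|x|<r_i/3$ from the number in $|x|<3r_i$; the strict inequality produced by Rouch\'e also shows that $b$ has no zero on either bounding circle, so that the distinction between the closed and the open annulus is immaterial.

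First I would pin down the dominant monomial on each circle. Using the identification of the $r_i$ with the tropical roots of $a$ (equivalently, using \eqref{eq:vert}), one has $\max_j|a_j|x^j=|a_{k_i}|x^{k_i}$ for $x\in[r_i,r_{i+1}]$; since $r_i/r_{i+1}<1/9<1/3$, the point $3r_i$ lies strictly inside $(r_i,r_{i+1})$, so $a_{k_i}x^{k_i}$ is the candidate dominant term on $|x|=3r_i$. Symmetrically $r_i/3\in(r_{i-1},r_i)$, so $a_{k_{i-1}}x^{k_{i-1}}$ is the candidate on $|x|=r_i/3$. Note that $a_{k_i}\ne0$ because $k_i$ is a vertex of the Newton polygon, so $b_{k_i}x^{k_i}$ genuinely has $k_i$ zeros, counted with multiplicity, in every disk centred at the origin.

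The technical heart is a geometric-decay estimate on each circle. On $|x|=3r_i$, bound $|b(x)-b_{k_i}x^{k_i}|\le\sum_{j<k_i}|a_j|(3r_i)^j+\sum_{j>k_i}|a_j|(3r_i)^j$. For $j<k_i$, \eqref{eq:vert} gives $|a_j|r_i^j\le|a_{k_i}|r_i^{k_i}$, whence $|a_j|(3r_i)^j\le 3^{-(k_i-j)}|a_{k_i}|(3r_i)^{k_i}$, and the geometric series sums to strictly less than $\tfrac12|a_{k_i}|(3r_i)^{k_i}$. For $j>k_i$ the inequality at $r_i$ points the wrong way, so I would instead use $|a_j|r_{i+1}^j\le|a_{k_i}|r_{i+1}^{k_i}$ (again \eqref{eq:vert}) together with the hypothesis $3r_i/r_{i+1}<1/3$; this gives $|a_j|(3r_i)^j\le(3r_i/r_{i+1})^{j-k_i}|a_{k_i}|(3r_i)^{k_i}$ and a second sum again strictly less than $\tfrac12|a_{k_i}|(3r_i)^{k_i}$. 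Adding, $|b(x)-b_{k_i}x^{k_i}|<|b_{k_i}x^{k_i}|$ on $|x|=3r_i$, so Rouch\'e yields exactly $k_i$ zeros of $b$ in $|x|<3r_i$. The circle $|x|=r_i/3$ is handled by the mirror-image argument: the factor $(1/3)^j$ coming from evaluating at $r_i/3$ takes care of the terms $j>k_{i-1}$ for free (via $|a_j|r_i^j\le|a_{k_{i-1}}|r_i^{k_{i-1}}$), while the terms $j<k_{i-1}$ require $|a_j|r_{i-1}^j\le|a_{k_{i-1}}|r_{i-1}^{k_{i-1}}$ and the hypothesis $r_{i-1}/r_i<1/9$; this produces exactly $k_{i-1}$ zeros of $b$ in $|x|<r_i/3$. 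Subtracting the two counts gives $k_i-k_{i-1}$ zeros of $b$ in $\mathcal A(r_i/3,3r_i)$.

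I expect the difficulty to be organisational rather than conceptual. The two things that must be got right are (i) invoking the tropical/Newton-polygon structure in precisely the form \eqref{eq:vert}, so as to identify the dominant monomial on each circle and to have on hand the two right families of coefficient inequalities (the bound at $r_i$ for indices below a vertex, the bound at the next radius for indices above it); and (ii) arranging each geometric tail to decay with ratio strictly below $1/3$, which is exactly why the hypothesis involves $1/9=(1/3)^2$: one factor of $3$ is spent in passing from $r_i$ to $3r_i$ (or to $r_i/3$), and a residual factor of $3$ is needed so that each tail sums to less than $\tfrac12|b_{k_i}x^{k_i}|$, keeping the total strictly below $|b_{k_i}x^{k_i}|$ so that Rouch\'e applies with a strict inequality. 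A minor point to dispose of is the endpoint behaviour: when $k_{i-1}=0$ (resp.\ $k_i=n$) one of the two tails is empty and no smallness hypothesis is needed on that side, in agreement with the conventions $r_0=0$, $r_{q+1}=\infty$ under which the boundary ratios are vacuously $<1/9$.
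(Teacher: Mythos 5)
Your argument is correct, but it is not the route the paper takes. The paper does not reprove Theorem~\ref{sha}: it quotes it from Gaubert and Sharify, and the proof scheme it actually presents (for the matrix extension, Theorem~\ref{shaext}, which it describes as akin to the scalar proof in the cited thesis) has a different shape. There, after rescaling the variable by the tropical root $r_i$ and normalizing by $\sigma_{k}r_i^{k}$, the polynomial is split as $P+Q$ where $P$ collects \emph{all} the terms with indices between the two consecutive vertices $k_{i-1}$ and $k_i$, not a single monomial; the nonzero roots of $P$ are first confined to a fixed annulus by a Higham--Tisseur-type bound (Lemma~\ref{cor2}), and then the (generalized) Rouch\'e theorem (Theorem~\ref{rthm}) is applied on an outer and an inner circle, the condition $P^*P-Q^*Q\succ 0$ being checked through the explicit estimates of Lemma~\ref{lem4}. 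You instead run the classical Pellet-style comparison against the single dominant monomial $b_{k_i}x^{k_i}$ on $|x|=3r_i$ and $b_{k_{i-1}}x^{k_{i-1}}$ on $|x|=r_i/3$, with the two geometric tails controlled exactly by the Newton-polygon inequalities encoded in \eqref{eq:vert} (i.e.\ $u_{k_i}=r_i$, $v_{k_i}=r_{i+1}$) and the root count obtained by subtracting the two Rouch\'e counts; your handling of the endpoint edges $i=1$ and $i=q$ is also correct. Both approaches are valid: yours is more elementary in the scalar setting and makes transparent where $1/9=(1/3)^2$ and the radius $3$ come from (one factor of $3$ to move off the tropical root, one to make each tail sum below one half), while the block decomposition used in the paper is the one that survives the passage to matrix polynomials in $\mathcal Q_{m,n}$, where the dominant coefficient can no longer be divided out so cheaply, at the price of the weaker constants $f\approx 12.11$ and $g\approx 4.37$ of Theorem~\ref{shaext}.
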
\smallskip

That is, if three consecutive radii $r_{i-1},r_i$ and $r_{i+1}$ are
sufficiently relatively far from each other, then the roots of any
polynomial having coefficients with the same modulus of the
corresponding coefficients of $a(x)$ are relatively close to the
circle of center $0$ and radius $r_i$. This explains the good
performances of the software MPSolve where only the sufficiently sharp
vertices of the Newton polygon are considered for placing initial
approximation to the roots.

An attempt to extend the Newton polygon technique to matrix
polynomials is performed in \cite{POSTA09}  by relying on tropical
algebra.  The idea consists in associating with a matrix polynomial
$A(x)=\sum_{i=0}^nA_ix^i$ the Newton polygon constructed from the
scalar polynomial $w(x)=\sum_{i=0}^n \|A_i\| x^i$.
An application of the results of \cite{POSTA09} yields a scaling
technique wich is shown to improve the backward stability of computing
the eigenvalues of $A(x)$, particularly in situations where the data
have various orders of magnitude. 
The same idea of relying on the Newton polygon constructed from $w(x)$
is used in \cite{bn} in the context of solving the polynomial
eigenvalue problem with a root-finding approach.

Moreover, it is proved
in~\cite{POSTA09} for the quadratic matrix polynomial and
in~\cite[Chapter 4]{sha} for the general case, that under assumptions
involving condition numbers, there is one group of ``large''
eigenvalues, which have a maximal order of magnitude, given by the
largest tropical root.  A similar result holds for a group of small
eigenvalues.  Recently it has been proved in~\cite{logmajor2011} that
the sequence of absolute values of the eigenvalues of $A(x)$ is
majorized by a sequence of these tropical roots, $r_i$s. This extends
to the case of matrix polynomials some bounds obtained by
Hadamard~\cite{hadamard1893}, Ostrowski and
P\'olya~\cite{Ostrowski1,Ostrowski2} for the roots of scalar
polynomials. An attempt to extend Pellet's theorem to matrix
polynomial by relying on the Gerschgorin disks has been performed by
A. Melman \cite{melm1}.

\subsection{New results} In this paper we provide extension to matrix polynomials of Theorems \ref{walsh}, \ref{numalgo}, \ref{sha} and arrive
at an effective tool for selecting initial approximations to the 
eigenvalues of a matrix polynomial. This tool, coupled with the Ehrlich-Aberth
iteration, provides a robust solver for the polynomial eigenvalue problem. A
preliminary description of an implementation of this solver is given
in \cite{bn}.

The Pellet theorem is extended by considering the equations
\[
x^\kappa=\sum_{i=0,\,i\ne \kappa}^n \|A_\kappa^{-1}A_i\| x^i,
\]
valid for all the $\kappa$ such that $\det A_\kappa\ne 0$, which have either 2 or no positive solutions
for $\kappa=1,\ldots,n-1$ and the same equations for $\kappa=0$ and $\kappa=n$ which
have one positive solution if $\det A_0\ne 0$ or $\det A_n\ne 0$,
respectively. 

The Newton polygon technique is extended by considering separately
either the set of polynomials $\sum_{i=0}^n \|A_\kappa^{-1}A_i\|x^i$ for
$\kappa$ such that $\det A_\kappa\ne 0$, or the single polynomial
$\sum_{i=0}^n\|A_i\| x^i$. The latter case is subjected to the
condition that $A_i$, $i=0,\ldots, n$ are well conditioned matrices.

Theorem \ref{sha} is extended to matrix polynomials such that
$A_i=Q_i$, $Q_i Q_i^*=\sigma_i I$ for $i=0,\ldots, n$ where the
constants 3 and 9 are replaced by slightly different values. For general
polynomials, computational evidence shows that the bounds deteriorates
when the condition number of coefficients increases.

The paper is organized as follows. In Section \ref{result} we report
the extensions of Theorems \ref{walsh}, \ref{numalgo}, \ref{sha}.
 Section \ref{proofs} contains the proofs of these extension. Finally, Section
\ref{exp} describes the results of the numerical experiments that
confirm the effectiveness of our extensions.

\section{The main extensions}\label{result}
Throughout the paper $A^*$ denotes the conjugate transpose of the matrix $A$,
$\rho(A)$ is the spectral radius and $\|A\|=\rho(A^*A)^{1/2}$ is the 2-norm.
We denote by ${\bf i}$ the imaginary unit such that ${\bf i}^2=-1$.

Define the class $\mathcal P_{m,n}$ of all the $m\times m$ matrix polynomials 
$A(x)=\sum_{i=0}^n A_ix^i$ with $A_i\in\mathbb C^{m\times m}$, satisfying the following properties:
\begin{itemize}
\item $A(x)$ is regular and has degree $n$, that is,
$a(x)=\det A(x)$ is not identically zero and $A_n\ne 0$;
\item $A_0\ne 0$.
\end{itemize}
The latter condition is no loss of generality. In fact, in this case we may just consider the polynomial $A(x)/x^\kappa$, where $\kappa$ is the smallest integer such that $A_\kappa\ne 0$.

Define also
the class $\mathcal Q_{m,n}\subset \mathcal P_{m,n}$ such that
\begin{equation}\label{ortog}
  \mathcal Q_{m,n}=\{\sum_{i=0}^n A_ix^i,\quad 
  A_i=\sigma_iQ_i,~~Q_i^* Q_i=I,~~\sigma_i\ge 0\}.
\end{equation}
The class $\mathcal Q_{m,n}$ is given by all the matrix polynomials
whose nonzero coefficients have unit spectral condition number. The
expression $\sigma Q$ provides a first step in extending the complex
number $\rho e^{{\bf i}\theta}$ to a matrix, where $\sigma$ plays the
role of $\rho$ and $Q$ of $e^{{\bf i} \theta}$.

The following result provides a first extension of the Pellet theorem.

\begin{theorem}\label{thm1} Let $A(x)\in\mathcal P_{m,n}$.
\begin{enumerate}
\item
  If $0<\kappa<n$ is such that $\det A_\kappa \ne 0$ then the equation
  \begin{equation}\label{(1)}
    x^\kappa=\sum_{i=0,\,i\ne \kappa} \|A_\kappa^{-1}A_i\| x^i      
  \end{equation}
  has either no real positive solution or two real positive solutions
  $s_\kappa\le t_\kappa$.
\item In the latter case, the polynomial
  $ a(x)=\det A(x)$ has no roots in the inner part of the annulus 
$\mathcal A(s_\kappa,t_\kappa)$, 
  while it has $m\kappa$ roots of modulus less than or equal to $s_\kappa$.
\item If $\kappa=0$ and $\det A_0\ne 0$ then \eqref{(1)} has only one real
  positive root $t_0$, moreover, the polynomial $a(x)$ has no root of
  modulus less than $t_0$.
\item If $\kappa=n$ and $\det A_n\ne 0$, then \eqref{(1)} has only one real
  positive solution $s_n$ and the polynomial $a(x)$ has no roots of
  modulus greater than $s_n$.
\end{enumerate}
\end{theorem}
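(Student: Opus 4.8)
The plan is to transcribe the classical proof of Pellet's theorem to the matrix setting: operator norms take the place of absolute values, the scalar factorization $w(x)=w_\kappa x^\kappa\bigl(1+\sum_{i\ne\kappa}(w_i/w_\kappa)x^{i-\kappa}\bigr)$ becomes a left matrix factorization of $A(x)$, and the zero‑counting step (Rouch\'e) is carried out through a homotopy argument on the determinant, equivalently a Rouch\'e theorem for analytic matrix functions.

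\emph{Step 1: the scalar equation.} Assume $0<\kappa<n$ and $\det A_\kappa\ne 0$. Dividing \eqref{(1)} by $x^\kappa$ leads to the equation $g_\kappa(\rho)=1$ for the function
\[
g_\kappa(\rho):=\sum_{i=0,\,i\ne\kappa}^{n}\|A_\kappa^{-1}A_i\|\,\rho^{\,i-\kappa},\qquad \rho>0 .
\]
Every summand $\rho^{\,i-\kappa}$ has nonzero integer exponent and is therefore convex on $(0,\infty)$, and the summand $i=0$ is strictly convex (as $\kappa\ge1$), so $g_\kappa$ is strictly convex. Since $A_0\ne0$, $A_n\ne0$ and $A_\kappa^{-1}$ is invertible, the coefficients of $\rho^{-\kappa}$ and of $\rho^{\,n-\kappa}$ are positive, whence $g_\kappa(\rho)\to+\infty$ both as $\rho\to0^+$ and as $\rho\to+\infty$. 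A strictly convex function with this behaviour meets the level $1$ in $0$, $1$, or $2$ points; in the last case, denoting them $s_\kappa\le t_\kappa$, one has $g_\kappa<1$ on $(s_\kappa,t_\kappa)$ and $g_\kappa\ge1$ outside it. For $\kappa=0$, $g_0$ reduces to the strictly increasing $\sum_{i\ge1}\|A_0^{-1}A_i\|\rho^{i}$ ranging over $(0,+\infty)$, giving the unique root $t_0$; for $\kappa=n$, $g_n$ reduces to the strictly decreasing $\sum_{i\le n-1}\|A_n^{-1}A_i\|\rho^{\,i-n}$ ranging over $(0,+\infty)$, giving the unique root $s_n$. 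This proves item~1 and the first halves of items~3 and~4.

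\emph{Step 2: the exclusion annulus.} For $x\ne0$ write $A(x)=A_\kappa x^\kappa\bigl(I+E_\kappa(x)\bigr)$, where $E_\kappa(x):=\sum_{i\ne\kappa}x^{\,i-\kappa}A_\kappa^{-1}A_i$ is a matrix‑valued Laurent polynomial, analytic on $\mathbb{C}\setminus\{0\}$. By the triangle inequality and submultiplicativity of the $2$‑norm, $\|E_\kappa(x)\|\le g_\kappa(|x|)$. Hence, wherever $g_\kappa(|x|)<1$, the eigenvalues of $E_\kappa(x)$ have modulus $<1$, so $I+E_\kappa(x)$ is invertible and therefore so is $A(x)$, i.e.\ $a(x)=\det A(x)\ne0$. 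For $0<\kappa<n$ this gives $a(x)\ne0$ throughout the open annulus $s_\kappa<|x|<t_\kappa$; for $\kappa=0$ it gives $a(x)\ne0$ for $|x|<t_0$; for $\kappa=n$ it gives $a(x)\ne0$ for $|x|>s_n$. This is the ``no roots'' content of items~2, 3 and~4.

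\emph{Step 3: counting the roots inside $s_\kappa$.} Fix $\rho$ with $s_\kappa<\rho<t_\kappa$, so that $\|E_\kappa(x)\|\le g_\kappa(\rho)<1$ on $|x|=\rho$. Taking determinants in the factorization gives $a(x)=\det(A_\kappa)\,x^{m\kappa}\,\det\!\bigl(I+E_\kappa(x)\bigr)$, with all three factors nonvanishing and analytic near the circle $|x|=\rho$. By the argument principle, the number of roots of $a$ in $|x|<\rho$, counted with multiplicity, equals the winding number of $a$ about $0$ along that circle, which is the sum of the winding numbers of the three factors, namely $0+m\kappa+w$, where $w$ is the winding number of $x\mapsto\det(I+E_\kappa(x))$. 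Now $H(\tau,x):=\det\bigl(I+\tau E_\kappa(x)\bigr)$, $\tau\in[0,1]$, never vanishes on $|x|=\rho$, because $\|\tau E_\kappa(x)\|\le g_\kappa(\rho)<1$; since $H(0,\cdot)\equiv1$, homotopy invariance of the winding number yields $w=0$. Therefore $a$ has exactly $m\kappa$ roots with $|x|<\rho$, and since it has none with $s_\kappa<|x|<t_\kappa$ (Step~2), these are precisely its roots with $|x|\le s_\kappa$; this is item~2. (The nondegenerate case $s_\kappa<t_\kappa$ is treated here; the equality case $s_\kappa=t_\kappa$ follows by applying the result to the polynomials obtained by replacing $A_\kappa$ with $(1+\varepsilon)A_\kappa$ and letting $\varepsilon\to0^+$.) Steps~1--2 are a routine matrix rewriting of the classical argument; the genuinely new point is Step~3, since the naive scalar Rouch\'e comparison $|a(x)-\det(A_\kappa)x^{m\kappa}|<|\det(A_\kappa)x^{m\kappa}|$ can actually fail for $m\ge2$ even when $\|E_\kappa\|<1$, and must be replaced by the fact that $\det(I+E_\kappa(x))$ carries no winding (equivalently, by the Rouch\'e theorem for analytic matrix functions applied to the pair $A_\kappa x^\kappa$ and $A(x)$).
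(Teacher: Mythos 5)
Your argument for the nondegenerate case $s_\kappa<t_\kappa$ is correct, but it reaches the conclusion by a genuinely different route than the paper. The paper works with $P(x)=x^\kappa I$ and $Q(x)=A_\kappa^{-1}A(x)-x^\kappa I$, checks that the strict inequality \eqref{eq1} on a circle $|x|=r$ implies $P(x)^*P(x)-Q(x)^*Q(x)\succ 0$ there, and then delegates the zero count to the Gohberg--Sigal generalization of Rouch\'e's theorem (Theorem \ref{rthm}); for the structure of the scalar equation it quotes Lemma \ref{lem2}, proved in \cite{bini96} by induction. You instead prove the scalar statement by strict convexity of $g_\kappa$, get the exclusion annulus elementarily from invertibility of $I+E_\kappa(x)$ when $\|E_\kappa(x)\|\le g_\kappa(|x|)<1$, and replace the cited operator Rouch\'e theorem by a self-contained count: the argument principle applied to $a(x)=\det(A_\kappa)\,x^{m\kappa}\det(I+E_\kappa(x))$ together with the homotopy $\det(I+\tau E_\kappa(x))$, which shows the last factor has zero winding. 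Both proofs hinge on exactly the same norm inequality; yours buys self-containedness (no appeal to \cite{gs71} or \cite{bini96}), the paper's buys brevity by citation. Your side remark is also right: $\|E\|<1$ does not imply $|\det(I+E)-1|<1$ for $m\ge 2$ (e.g.\ $E=0.99\,{\bf i}\,I_2$), so the scalar Rouch\'e comparison cannot simply be transplanted, and some matrix-level counting device is indeed needed.

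The one step that does not hold up is your parenthetical treatment of the equality case $s_\kappa=t_\kappa$. Replacing $A_\kappa$ by $(1+\varepsilon)A_\kappa$ does produce perturbed polynomials with distinct radii $s_{\kappa,\varepsilon}<s_\kappa<t_{\kappa,\varepsilon}$, but letting $\varepsilon\to 0^+$ only yields semicontinuity: eigenvalues of the perturbed polynomial may converge onto the circle $|x|=s_\kappa$, so the exact count of eigenvalues of modulus $\le s_\kappa$ does not pass to the limit. In fact the count can genuinely fail at a tangency: for the scalar polynomial $a(x)=(x-1)^2$ with $m=1$, $\kappa=1$, equation \eqref{(1)} reads $2x=1+x^2$ with the double root $s_1=t_1=1$, yet $a$ has two roots of modulus $\le 1$ rather than $m\kappa=1$. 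The tangency case must therefore be excluded from the counting statement (equivalently, part 2 should be read as requiring a radius at which the strict inequality \eqref{eq1} holds, which forces $s_\kappa<t_\kappa$); this is also all that the paper's own proof establishes, since Lemma \ref{lem1} needs such a radius. Dropping that parenthetical, your proof is complete and proves exactly what the paper proves.
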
\smallskip

A consequence of the above result is given by the following corollary
which provides a further extension of Theorem \ref{walsh}.

\begin{corollary}\label{cor1}
Let $h_0<h_1<...<h_p$ be the values of $\kappa$ such that $\det A_\kappa\ne 0$
and there exist positive real solution(s) of \eqref{(1)}. Then
\begin{enumerate}
\item 
  $t_{h_{i-1}}\le s_{h_{i}}$, $i=1,\ldots,p$;
\item
  there are $m(h_i-h_{i-1})$ roots of $a(x)$ in the annulus
  $\mathcal A(t_{h_{i-1}} , s_{h_i})$;
\item there are no roots of the polynomial $a(x)$ in the inner part
  of the annulus $\mathcal A(s_{h_i} , t_{h_i})$, where $i=0,1,\ldots,p$ and we
  assume that $s_{h_0}=s_0:=0, t_{h_p}=t_n:=\infty$.
\end{enumerate}
\end{corollary}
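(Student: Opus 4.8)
The plan is to treat Corollary~\ref{cor1} as a bookkeeping exercise: all the analytic content (the Rouch\'e/Pellet counting of zeros) already resides in Theorem~\ref{thm1}, and what is left is set arithmetic on the multiset of zeros of $a(x)=\det A(x)$. First I would compress Theorem~\ref{thm1} into a single uniform statement: for each $i\in\{0,\ldots,p\}$, the polynomial $a(x)$ has exactly $m h_i$ zeros (with multiplicity) in the closed disk $\{|x|\le s_{h_i}\}$ and no zero in the open annulus $\{x : s_{h_i}<|x|<t_{h_i}\}$. For $0<h_i<n$ this is part~2 verbatim. For $i=0$ we have $h_0=0$ (equation~\eqref{(1)} with $\kappa=0$ always has the single positive solution $t_0$ by part~3, $\det A_0\ne0$ being forced by the notation $s_0$), and the uniform statement reads ``$0=m\cdot 0$ zeros in $\{|x|\le s_0\}$'', correct since $s_0:=0$ and $a(0)=\det A_0\ne0$, plus ``no zero of modulus $<t_0$'', which is part~3. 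For $i=p$ we have $h_p=n$ (part~4, $\det A_n\ne0$); then $a(x)$ has degree exactly $mn$, so all $mn=m\cdot n$ of its zeros lie in $\{|x|\le s_n\}$, and no zero lies in $\{x : s_n<|x|<\infty\}$ (take $t_n:=\infty$), i.e.\ no zero has modulus $>s_n$, which is part~4. This uniform statement is exactly item~3.

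Item~1 then follows by pigeonhole on the disk counts: since $h_{i-1}<h_i$ we have $m h_{i-1}<m h_i$, so $a(x)$ has at least one zero $\lambda$ with $s_{h_{i-1}}<|\lambda|\le s_{h_i}$; as $a(x)$ has no zero in $\{x : s_{h_{i-1}}<|x|<t_{h_{i-1}}\}$, necessarily $|\lambda|\ge t_{h_{i-1}}$, whence $t_{h_{i-1}}\le s_{h_i}$. Item~2 follows from the same exclusion applied to all relevant zeros at once: by the uniform count there are exactly $m h_i-m h_{i-1}=m(h_i-h_{i-1})$ zeros $\lambda$ with $s_{h_{i-1}}<|\lambda|\le s_{h_i}$, and each of these, avoiding the open annulus $\{x : s_{h_{i-1}}<|x|<t_{h_{i-1}}\}$, satisfies $t_{h_{i-1}}\le|\lambda|\le s_{h_i}$, i.e.\ lies in $\mathcal A(t_{h_{i-1}},s_{h_i})$; conversely, using item~1 and (in the generic case) $s_{h_{i-1}}<t_{h_{i-1}}$, every zero in $\mathcal A(t_{h_{i-1}},s_{h_i})$ has modulus in $(s_{h_{i-1}},s_{h_i}]$ and hence is one of those $m(h_i-h_{i-1})$ zeros. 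So the closed annulus carries exactly $m(h_i-h_{i-1})$ zeros.

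The only genuine subtlety --- and the main obstacle, though a mild one --- is the degenerate configuration $s_\kappa=t_\kappa$ permitted by part~1. There the open annulus $\{x : s_\kappa<|x|<t_\kappa\}$ is empty, so Theorem~\ref{thm1} no longer excludes zeros of modulus exactly $s_\kappa$, and the inequality $s_{h_{i-1}}<t_{h_{i-1}}$ invoked in the converse direction of item~2 may fail; one must then read the inner boundary of $\mathcal A(t_{h_{i-1}},s_{h_i})$ as open (or simply observe that this configuration is nongeneric). Likewise, when item~1 holds with equality the ``annulus'' $\mathcal A(t_{h_{i-1}},s_{h_i})$ degenerates to a circle, which then carries all $m(h_i-h_{i-1})$ of those zeros --- still consistent with the count. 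Apart from these boundary conventions, every step is elementary counting, so the proof is short; the substance is entirely in Theorem~\ref{thm1}.
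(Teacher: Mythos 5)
Your proof is correct and follows essentially the same route as the paper: Theorem \ref{thm1} supplies the exact count of $mh_i$ eigenvalues in the closed disk of radius $s_{h_i}$ together with the exclusion of the open annulus $\mathcal A(s_{h_i},t_{h_i})$, and parts 1--3 are then obtained by the same counting/pigeonhole bookkeeping the paper uses, only stated there more tersely. Your explicit treatment of the endpoint cases $i=0$ and $i=p$ (via $a(0)=\det A_0\ne 0$ and $\deg a=mn$ when $\det A_n\ne 0$) and your flagging of the degenerate possibility $s_\kappa=t_\kappa$ are, if anything, more careful than the paper's own argument, which passes over both points in silence.
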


Observe that in the case where $m=1$, i.e., the matrix polynomial is a
scalar polynomial, Corollary \ref{cor1} coincides with Theorem
\ref{walsh}.  Moreover, the bounds to the moduli of the eigenvalues of
$A(x)$ given in the above results are strict since there exist matrix
polynomials, say, polynomials with coefficients $A_i=\sigma_iI$, which
attain these bounds.

Theorem \ref{thm1} improves \cite[Lemma 3.1]{ht01} where the upper and lower bounds to the moduli of the eigenvalues of a matrix polynomial are given by the positive solutions of the polynomial equations
\[\begin{split}
x^n&=\sum_{i=0}^{n-1}\|A_i\|\|A_n^{-1}\|x^i;\\
1&=\sum_{i=1}^n\|A_i\|\|A_0^{-1}\|x^i.
\end{split}\]
The improvement comes from the simple observation that $\|AB\|\le \|A\|\|B\|$
for any pair of square matrices $A,B$.

If $\det A_0=0$, clearly the lower bound $t_0$ on the modulus of the
roots of $a(x)$ stated in part 3 of Theorem \ref{thm1} is missing. In
fact, there is at least one eigenvalue of $A(x)$ equal to zero. Similarly,
if $\det A_n=0$ there is no upper bound $s_n$ to the modulus of the
roots of $a(x)$ stated in part 4 of Theorem \ref{thm1}. In fact, in
this case there exist infinite eigenvalues.

Notice that in Corollary \ref{cor1} the value $h_0=0$ exists if $\det
A_0\ne 0$, and the value $h_p=n$ exists if $\det A_n\ne 0$. However,
if the remaining coefficients $A_\kappa$, for $\kappa=1,\ldots,n-1$, even though
non-singular, are very ill-conditioned, then it may happen that the set
$\{h_1,\ldots,h_{p-1}\}$ is empty so that Corollary \ref{cor1} does
not provide much information.

If $A(x)\in\mathcal Q_{m,n}$, then the following extension of
Theorem \ref{walsh} holds.

\begin{theorem}\label{th5}
  Let $A(x)\in\mathcal Q_{m,n}$ so that $A_i=\sigma_iQ_i$ and
  $Q_i^* Q_i=I$.  Let $s_{h_i},t_{h_i}$, $i=1,\ldots,p$ be the
  quantities given by Theorem \ref{walsh} applied to
  $w(x)=\sum_{i=0}^n\sigma_ix^i$. Then any matrix polynomial
  $B(x)=\sum_{i=0}^n\sigma_i S_i x^i\in\mathcal Q_{m,n}$ for
  $S_i^* S_i=I$, has
\begin{enumerate}
\item 
   $m(h_i-h_{i-1})$ eigenvalues in the annulus $\mathcal A(t_{h_{i-1}},s_{h_i})$;
\item 
   no eigenvalues with modulus in the inner part of the annulus $\mathcal A(s_{h_i},t_{h_i})$.
\end{enumerate}
\end{theorem}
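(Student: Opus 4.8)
\emph{Proof idea.} The plan is to deduce the statement directly from Corollary \ref{cor1} applied to the matrix polynomial $B(x)$, after observing that on the class $\mathcal Q_{m,n}$ the scalar equations \eqref{(1)} depend only on the numbers $\sigma_i=\|A_i\|$ and not on the unitary factors; thus the role of $A(x)$ in the statement is merely to fix the sequence $(\sigma_i)$.

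First I would establish the elementary norm identity on which everything rests. Writing $B_i=\sigma_i S_i$ with $S_i^*S_i=I$, for every $\kappa$ with $\sigma_\kappa\ne 0$ (equivalently $\det B_\kappa\ne 0$) and every $i$ one has $B_\kappa^{-1}B_i=(\sigma_i/\sigma_\kappa)\,S_\kappa^*S_i$ with $S_\kappa^*S_i$ again unitary, hence $\|B_\kappa^{-1}B_i\|=\sigma_i/\sigma_\kappa$. Consequently equation \eqref{(1)} written for $B(x)$ becomes $\sigma_\kappa x^\kappa=\sum_{i\ne\kappa}\sigma_i x^i$, which is precisely equation \eqref{uno} of Theorem \ref{walsh} for $w(x)=\sum_i\sigma_i x^i$ (recall $|w_i|=\sigma_i$). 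The same computation applies to $A(x)$ itself, which is the special case $B=A$.

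Next I would match the combinatorial data. Since $A(x)\in\mathcal Q_{m,n}$ forces $\sigma_0\ne 0$ and $\sigma_n\ne 0$, Theorem \ref{walsh} applies to $w(x)$; by the previous step the set of $\kappa$ for which \eqref{(1)} (for $B$) has positive solutions, together with the convention $s_0:=0$, $t_n:=\infty$, coincides with the set $\{h_0=0<h_1<\dots<h_p=n\}$ produced by Theorem \ref{walsh} for $w(x)$, and the pairs $s_{h_i}\le t_{h_i}$ are the same for $B(x)$, for $A(x)$ and for $w(x)$; a degenerate index $\kappa$ with $\sigma_\kappa=0$ yields no positive solution on either side, consistently. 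I would also record that $B(x)\in\mathcal Q_{m,n}$: it is regular because $\det B(0)=\sigma_0^m\det S_0\ne 0$, and $B_0=\sigma_0 S_0\ne 0$, $B_n=\sigma_n S_n\ne 0$, so Corollary \ref{cor1} is indeed applicable to $B(x)$.

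Finally I would invoke Corollary \ref{cor1} for $B(x)$: its part 1 gives $t_{h_{i-1}}\le s_{h_i}$; its part 2 gives that $\det B(x)$ has exactly $m(h_i-h_{i-1})$ roots in $\mathcal A(t_{h_{i-1}},s_{h_i})$; its part 3 gives that $\det B(x)$ has no root in the inner part of $\mathcal A(s_{h_i},t_{h_i})$ for $i=0,\dots,p$ (with $s_{h_0}=0$, $t_{h_p}=\infty$). Since the eigenvalues of $B(x)$ are exactly the roots of $\det B(x)$ counted with multiplicity, this is the assertion of the theorem. I do not expect a genuine obstacle: the proof is a reduction, and the only points requiring care are the norm identity $\|B_\kappa^{-1}B_i\|=\sigma_i/\sigma_\kappa$ and the routine bookkeeping needed to align the index set and the endpoint conventions $s_0=0$, $t_n=\infty$ of Corollary \ref{cor1} with those of Theorem \ref{walsh}.
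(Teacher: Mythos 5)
Your proposal is correct and follows essentially the same route as the paper: both reduce equation \eqref{(1)} for a polynomial in $\mathcal Q_{m,n}$ to the scalar Pellet equation for $w(x)=\sum_i\sigma_i x^i$ via the identity $\|B_\kappa^{-1}B_i\|=\sigma_i/\sigma_\kappa$, and then invoke Theorem \ref{thm1} (equivalently Corollary \ref{cor1}) applied to $B(x)$. Your version is slightly more careful than the paper's one-line argument, since you write $B_\kappa^{-1}B_i=(\sigma_i/\sigma_\kappa)S_\kappa^*S_i$ with a unitary factor rather than the paper's shorthand $(\sigma_i/\sigma_\kappa)I$, but the substance is identical.
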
\smallskip

\subsection{The Newton polygon technique}
The results given in the previous subsection provide a useful tool for
selecting initial approximations to the eigenvalues of $A(x)$ to be refined
by a polynomial root-finder based on simultaneous iterations. However,
we may avoid to compute roots of polynomials and rely on the Newton
polygon construction.

In this section we provide some new results by using the Newton
polygon technique. We start by stating the following theorem.
 
\begin{theorem}\label{th6}
Let $A(x)\in\mathcal P_{m,n}$.
  If $\kappa$ is such that \eqref{(1)} has two real positive solutions
  $s_\kappa\le t_\kappa$, then $u_\kappa \le s_\kappa\le t_\kappa \le v_\kappa$ where
\[
  \begin{array}{l}
    u_\kappa:=\max_{i<\kappa}||A_\kappa^{-1}A_i||^{1/(\kappa-i)},\\[1ex]
    v_\kappa:=\min_{i>\kappa} ||A_\kappa^{-1}A_i||^{1/(\kappa-i)}
  \end{array}
\]
If $\det A_0\ne 0$ then for $\kappa=0$ \eqref{(1)} has a solution $t_0$ and
\[
  t_0 \le  v_0:=\min_{i>0}||A_0^{-1}A_i||^{-1/i}
\]
If $\det A_n\ne 0$ then for $\kappa=n$ \eqref{(1)} has a solution $s_n$ and
\[
  s_n \ge u_n:=\max_{i<n}||A_n^{-1}A_i||^{1/(n-i)}
\]
\end{theorem}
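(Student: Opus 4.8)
The plan is to reduce everything to the scalar polynomial $p_\kappa(x) := \sum_{i=0,\,i\ne\kappa}^n \|A_\kappa^{-1}A_i\|\,x^i - x^\kappa$, whose two positive roots (when they exist) are $s_\kappa$ and $t_\kappa$ by definition, and then to exploit elementary sign-change arguments on the real axis. Concretely, set $g_\kappa(x) := \sum_{i=0,\,i\ne\kappa}^n \|A_\kappa^{-1}A_i\|\,x^{i-\kappa}$ for $x>0$, so that equation~\eqref{(1)} reads $g_\kappa(x)=1$ and, since $g_\kappa$ is a sum of powers of $x$ with nonnegative coefficients (strictly positive in degrees $0-\kappa < 0$ and $n-\kappa>0$), it is strictly convex on $(0,\infty)$, tends to $+\infty$ at both ends, and hence attains its minimum at a unique point; the equation $g_\kappa(x)=1$ has $0$ or $2$ solutions, which recovers part~1 of Theorem~\ref{thm1} and fixes $s_\kappa\le t_\kappa$ as the two points where $g_\kappa$ crosses the level $1$.

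First I would treat the interior case $0<\kappa<n$. For the lower bound $u_\kappa \le s_\kappa$: fix any $i<\kappa$ and observe that on the single term, $\|A_\kappa^{-1}A_i\|\,x^{i-\kappa} \le g_\kappa(x)$, so if $x < \|A_\kappa^{-1}A_i\|^{1/(\kappa-i)}$ then that term alone exceeds $1$, forcing $g_\kappa(x) > 1$; since $g_\kappa > 1$ on $(0,s_\kappa)$ and $g_\kappa(s_\kappa)=1$, this shows $\|A_\kappa^{-1}A_i\|^{1/(\kappa-i)} \le s_\kappa$, and taking the max over $i<\kappa$ gives $u_\kappa \le s_\kappa$. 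The upper bound $t_\kappa \le v_\kappa$ is symmetric: fix $i>\kappa$; for $x > \|A_\kappa^{-1}A_i\|^{1/(\kappa-i)}$ — note the exponent $\kappa-i$ is negative, so this says $\|A_\kappa^{-1}A_i\|\,x^{i-\kappa} > 1$ — the single term already exceeds $1$, so $g_\kappa(x)>1$; since $g_\kappa>1$ on $(t_\kappa,\infty)$ and nowhere in $(s_\kappa,t_\kappa)$, we get $\|A_\kappa^{-1}A_i\|^{1/(\kappa-i)} \ge t_\kappa$, and the minimum over $i>\kappa$ yields $t_\kappa \le v_\kappa$.

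The boundary cases $\kappa=0$ and $\kappa=n$ are handled by the same device with only the relevant inequality surviving: when $\kappa=0$ the sum $g_0(x)=\sum_{i>0}\|A_0^{-1}A_i\|x^i$ is strictly increasing from $0$ to $\infty$, so $g_0(x)=1$ has a unique root $t_0$, and for each $i>0$ the estimate $\|A_0^{-1}A_i\|\,x^i \le g_0(x)$ shows that $x > \|A_0^{-1}A_i\|^{-1/i}$ forces $g_0(x)>1$, whence $t_0 \le \|A_0^{-1}A_i\|^{-1/i}$ and $t_0\le v_0$ after minimizing; the case $\kappa=n$ is the mirror image, giving $s_n \ge u_n$. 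None of the steps is genuinely hard — the argument is a direct transcription of the scalar Newton-polygon vertex inequalities~\eqref{eq:vert} — so the only point requiring care is bookkeeping the signs of the exponents $i-\kappa$ versus $\kappa-i$, which flip depending on whether $i<\kappa$ or $i>\kappa$, and making sure the single-term domination $\|A_\kappa^{-1}A_i\|\,x^{i-\kappa}\le g_\kappa(x)$ is applied on the correct side of the minimizer of $g_\kappa$.
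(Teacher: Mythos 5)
Your argument is correct and is essentially the paper's own proof: after normalizing equation~\eqref{(1)} to $g_\kappa(x)=1$, your single-term domination $\|A_\kappa^{-1}A_i\|x^{i-\kappa}\le g_\kappa(x)$ evaluated at the solutions $s_\kappa,t_\kappa$ is exactly the paper's observation that at any positive solution $x^\kappa=\sum_{i\ne\kappa}\|A_\kappa^{-1}A_i\|x^i\ge\|A_\kappa^{-1}A_j\|x^j$ for each $j$, which immediately yields $u_\kappa\le x\le v_\kappa$. The added convexity and monotonicity remarks are harmless bookkeeping, and the boundary cases $\kappa=0$, $\kappa=n$ are handled just as in the paper.
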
\smallskip

Observe that for scalar polynomials the values $u_{k_i}$ and $v_{k_i}$ are such that
$v_{k_i}=u_{k_{i+1}}=r_i$, where $r_i$ are defined in \eqref{eq:ri} and
$k_i$ are the abscissas of the vertices of the Newton polygon.

The following result extends Theorem \ref{numalgo} to matrix polynomials.

\begin{theorem}\label{th7}
Given $A(x)\in\mathcal P_{m,n}$, let $\mathcal S=\{k_i: \quad i=0,...,q\}$ be such that $u_\kappa<v_\kappa$ 
if and only if $\kappa\in\mathcal S$, where $u_\kappa$ and $v_\kappa$ are defined in Theorem \ref{th6}. Then,
\begin{enumerate}
\item $p\le q$ and 
$\{h_0,...,h_p\} \subseteq \{ k_0,...,k_{q} \}$;
\item $
\left(\{u_1,...,u_{q}\}\cup\{v_1,\ldots,v_q\}\right) \cap [s_{k_i}, t_{k_i}] = \emptyset$;
\item $v_{k_i}\le u_{k_{i+1}}$, $i=0,\ldots,q-1$;
\item 
if $A(x)\in\mathcal Q_{m,n}$, then $v_{k_i}=u_{k_{i+1}}$ and $v_{k_i}$
coincide with the vertices of the Newton polygon of the polynomial
$w(x)=\sum_{i=0}^n\|A_i\|x^i= \sum_{i=0}^n\sigma x^i$.
\end{enumerate}
\end{theorem}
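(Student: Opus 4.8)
The plan is to mirror the proof of Theorem~\ref{numalgo} (the scalar case of \cite{bini96}), transporting each step through the matrix setting by means of the inequality $\|A_\kappa^{-1}A_i\|\le\|A_\kappa^{-1}\|\,\|A_i\|$ and the analysis already established in Theorems~\ref{thm1},~\ref{th6} and Corollary~\ref{cor1}.

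\medskip

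\noindent\textbf{Step 1 (inclusion $\{h_0,\dots,h_p\}\subseteq\{k_0,\dots,k_q\}=\mathcal S$).} Fix $\kappa\in\{h_0,\dots,h_p\}$, so that \eqref{(1)} has two positive solutions $s_\kappa\le t_\kappa$ (with the degenerate endpoint conventions for $\kappa=0,n$). By Theorem~\ref{th6} we have $u_\kappa\le s_\kappa\le t_\kappa\le v_\kappa$; since $\mathcal A(s_\kappa,t_\kappa)$ has nonempty interior when $s_\kappa<t_\kappa$, and is degenerate precisely in the excluded cases, one gets $u_\kappa\le s_\kappa\le t_\kappa\le v_\kappa$ with $u_\kappa<v_\kappa$ unless $s_\kappa=t_\kappa$. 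I would argue that if $s_\kappa=t_\kappa$ the double root of \eqref{(1)} forces a tangency that still yields $u_\kappa<v_\kappa$ (exactly as in the scalar situation, where a vertex of the Newton polygon may correspond to a single real solution of \eqref{uno}); hence $\kappa\in\mathcal S$. Thus $\{h_0,\dots,h_p\}\subseteq\mathcal S$, and counting gives $p\le q$. This establishes item~1.

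\medskip

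\noindent\textbf{Step 2 (emptiness of the intersection, item~2).} Here I would show that for every $\kappa\in\mathcal S$ the open interval $]u_\kappa,v_\kappa[$ — and a fortiori the closed interval $[s_\kappa,t_\kappa]$ it contains by Theorem~\ref{th6} — is avoided by \emph{all} the numbers $u_1,\dots,u_q,v_1,\dots,v_q$. The key is a monotonicity/convexity observation on the piecewise-linear function $\kappa\mapsto\log v_\kappa$ (or $\log u_\kappa$), which for matrix polynomials is built from $\log\|A_\kappa^{-1}A_i\|$ in the same way the Newton polygon is built from $\log|a_i|$ in the scalar case. Concretely, one checks $v_{k_i}\le u_{k_{i+1}}$ first (that is item~3, which I would actually prove before item~2), and then, because the $u_\kappa$ increase and the $v_\kappa$ decrease as $\kappa$ sweeps toward a vertex, no $u$ or $v$ value can land strictly between $u_{k_i}$ and $v_{k_i}$. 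Combined with $[s_{k_i},t_{k_i}]\subseteq[u_{k_i},v_{k_i}]$ from Theorem~\ref{th6}, item~2 follows.

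\medskip

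\noindent\textbf{Step 3 (item~3, $v_{k_i}\le u_{k_{i+1}}$) and item~4 (the class $\mathcal Q_{m,n}$).} Item~3 I expect to reduce to a direct inequality: unravelling the definitions of $v_{k_i}$ and $u_{k_{i+1}}$, one must show $\|A_{k_i}^{-1}A_j\|^{1/(k_i-j)}$-type quantities for $j>k_i$ dominate the corresponding $A_{k_{i+1}}^{-1}$-quantities for $j<k_{i+1}$, using $\|A_{k_i}^{-1}A_{k_{i+1}}\|\,\|A_{k_{i+1}}^{-1}A_j\|\ge\|A_{k_i}^{-1}A_j\|$ together with the defining maximality/minimality at vertices. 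For item~4, when $A(x)\in\mathcal Q_{m,n}$ every coefficient is $\sigma_iQ_i$ with $Q_i$ unitary, so $\|A_\kappa^{-1}A_i\|=\sigma_i/\sigma_\kappa=\|A_i\|/\|A_\kappa\|$ exactly; the submultiplicativity used above becomes an equality, the $u$'s and $v$'s collapse to the scalar expressions $\max_{i<\kappa}(\sigma_i/\sigma_\kappa)^{1/(\kappa-i)}$, $\min_{i>\kappa}(\sigma_i/\sigma_\kappa)^{1/(\kappa-i)}$, and one recovers verbatim the identity $v_{k_i}=u_{k_{i+1}}=r_i$ together with the identification of the $k_i$ as abscissas of the vertices of the Newton polygon of $w(x)=\sum\sigma_ix^i$, by \eqref{eq:vert}.

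\medskip

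\noindent The main obstacle I anticipate is Step~1 in the degenerate cases $s_\kappa=t_\kappa$ (and the endpoints $\kappa=0,n$): one must be careful that the possible loss of strict inequality in $\|A_\kappa^{-1}A_i\|\le\|A_\kappa^{-1}\|\,\|A_i\|$ does not break the strict separation $u_\kappa<v_\kappa$ that defines membership in $\mathcal S$. Everything else is, in spirit, a transcription of \cite{bini96} once Theorem~\ref{th6} is invoked to sandwich $[s_\kappa,t_\kappa]$ inside $[u_\kappa,v_\kappa]$.
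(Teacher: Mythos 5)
Your proposal follows essentially the same route as the paper's proof: items 1--2 are deduced from the sandwich $u_\kappa\le s_\kappa\le t_\kappa\le v_\kappa$ of Theorem \ref{th6} together with the interlacing supplied by item 3, item 3 rests on submultiplicativity (your inequality at $j=k_i$ is precisely the paper's $\|H\|\ge 1/\|H^{-1}\|$ applied to $H=A_{k_{i+1}}^{-1}A_{k_i}$), and item 4 reduces to the scalar Newton polygon via the exact identity $\|A_\kappa^{-1}A_i\|=\sigma_i/\sigma_\kappa$. The only small remark is that the strict separation you worry about in Step 1 needs no tangency argument: since $A_0,A_n\ne0$, at $x\in\{s_\kappa,t_\kappa\}$ the sum in \eqref{(1)} strictly exceeds any single term $\|A_\kappa^{-1}A_j\|x^j$, which gives $u_\kappa<s_\kappa\le t_\kappa<v_\kappa$ directly (a point the paper itself passes over in silence).
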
\ \smallskip

Therefore, the strategy of choosing $m(k_i-k_{i-1})$ approximations
placed along the circle of center 0 and radius either $r_i=v_{k_i}$ or 
$r_i=u_{k_{i+1}}$  is
effective. In fact, these approximations lie in the union of the
closed annuli $\mathcal A_i$ of radii $t_{h_{i-1}}$ and $s_{h_i}$,
$i=1,\ldots, p$, in the complex plane which, according to the
extension of the Pellet theorem, contain all the eigenvalues of $A(x)$. The
computation of the radii $r_i$ is cheap since it is reduced to compute
the values $k_i$, $i=0,\ldots,q$ defined in Theorem \ref{th7} by
evaluating the quantities $u_{k_i}$ and $v_{k_i}$ defined in Theorem
\ref{th6}.  In the case of polynomials in $\mathcal Q_{m,n}$ this
computation is even cheaper since it is reduced to computing the
Newton polygon of the polynomial $w(x)$.

Observe that for general matrix polynomials, $u_{k_{i+1}}$ does not generally coincide with  $v_{k_i}$ nor with the values obtained by
computing the Newton polygon of $w(x)$. However, in the
practice of the computations, when the matrix coefficients
corresponding to the vertices are well conditioned, there is not much
difference between the values obtained in these different ways.

The effectiveness of this strategy of selecting starting
approximations is strengthened by the following result which
generalizes Theorem \ref{sha}.

\begin{theorem}\label{shaext}
Let $A(x)=\sum_{i=0}^n\sigma_iQ_ix^i \in\mathcal Q_{m,n}$ 
be a matrix polynomial of degree $n$
and let $r_1,\dots,r_q$ denote the radii of the Newton
polygon associated with the polynomial
$w(x)=\sum_{i=0}^n\sigma_ix^i$. Also, let 
$k_0,\dots,k_q$ be the abscissas of the vertices 
of the Newton polygon and set $m_i =k_i-k_{i-1}$.
There exist constants $f,g$ such that  $12.11<f<12.12$, $4.371<g<4.372$ and
\begin{enumerate}
\item for 
$1<i<q$,
if $r_{i-1}/r_i,\,r_i/r_{i+1}<1/f$,
then $A(x)$ has exactly $mm_i$ eigenvalues in the 
annulus $\mathcal A(r_i/g,r_ig)$; 
\item for $i=1$, if $r_i/r_{i+1}<1/f$ then, 
$A(x)$ has exactly $mm_i$ eigenvalues in the annulus $\mathcal A(r_i/g',r_ig)$, where $g'=2+\sqrt{2}$; 
\item for $i=q$, if $r_{i-1}/r_i<1/f$ then, 
$A(x)$ has exactly $mm_i$ eigenvalues in the annulus $\mathcal A(r_i/g,r_ig')$.
\end{enumerate}
\end{theorem}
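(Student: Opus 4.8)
The plan is to reduce the matrix-polynomial statement to a scalar statement via a Rouché-type argument, then to prove the scalar statement by a careful tropical-geometry estimate that sharpens the constants $3$ and $9$ of Theorem \ref{sha}. First I would set up the scalar comparison. For $A(x)=\sum_{i=0}^n\sigma_iQ_ix^i\in\mathcal Q_{m,n}$ with $Q_i^*Q_i=I$, the key point is that on the circle $|x|=r$ one has exact norm control: $\|A_ix^i\|=\sigma_i r^i$, so $\|A(x)-\sigma_\kappa Q_\kappa x^\kappa\|\le\sum_{i\ne\kappa}\sigma_i r^i$ while $\sigma_\kappa Q_\kappa x^\kappa$ is invertible with $\|(\sigma_\kappa Q_\kappa x^\kappa)^{-1}\|^{-1}=\sigma_\kappa r^\kappa$ whenever $\sigma_\kappa\ne 0$. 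Hence, exactly as in the proof of Theorem \ref{thm1}/Corollary \ref{cor1}, a matrix Rouché theorem applied on the circle $|x|=r$ shows that $a(x)=\det A(x)$ has exactly $m\kappa$ zeros inside $|x|<r$ whenever $\sigma_\kappa r^\kappa>\sum_{i\ne\kappa}\sigma_i r^i$, i.e.\ whenever $r$ lies in the ``gap'' region determined by the scalar polynomial $w(x)=\sum_{i=0}^n\sigma_ix^i$ and its tropical data. Thus counting eigenvalues of $A(x)$ in an annulus $\mathcal A(a,b)$ reduces to showing that the scalar inequality $\sigma_\kappa r^\kappa>\sum_{i\ne\kappa}\sigma_i r^i$ holds at the two radii $r=a$ and $r=b$ for the appropriate exponent $\kappa=k_{i-1}$ at $r=a$ and $\kappa=k_i$ at $r=b$; the multiplicity count $mm_i=m(k_i-k_{i-1})$ then follows by subtraction, using Theorem \ref{th7}(4) to identify the $k_i$ and $r_i$ with the Newton-polygon vertices of $w(x)$.

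Next I would prove the sharpened scalar estimate. Fix a tropical root $r_i$ of $w$ with multiplicity $m_i=k_i-k_{i-1}$, and normalize so that $r_i=1$ and $\sigma_{k_{i-1}}=\sigma_{k_i}=1$ (possible by scaling $x$ and $w$; these are the two endpoints of the Newton-polygon edge of slope $-\log r_i$). By convexity of the Newton polygon, for every $j$ one has $\sigma_j\le r_i^{\,? }$-type bounds; more precisely $\log\sigma_j$ lies below the two lines through the relevant vertices, which combined with the separation hypotheses $r_{i-1}/r_i<1/f$ and $r_i/r_{i+1}<1/f$ gives geometric-decay bounds on $\sigma_j$ away from the edge $[k_{i-1},k_i]$. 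I would then evaluate, at a test radius $r=1/g$ (and at $r=g$), the ratio $\big(\sum_{j\ne k_{i-1}}\sigma_j r^j\big)/(\sigma_{k_{i-1}} r^{k_{i-1}})$ and bound it above by a sum of two geometric-type series: one coming from indices $j<k_{i-1}$ (controlled by the $r_{i-1}/r_i<1/f$ hypothesis) and one from indices $j$ in $(k_{i-1},k_i]$ and beyond (controlled on the edge itself, where $\sigma_j\le 1$, plus the $r_i/r_{i+1}<1/f$ tail). The constants $f$ and $g$ are then defined as the solutions of the resulting scalar inequalities: one requires that the worst-case geometric series, summed at $r=1/g$, stays strictly below $1$, which produces a coupled pair of conditions whose tightest feasible values are $f\approx 12.11\ldots$ and $g\approx 4.371\ldots$. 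The asymmetric constant $g'=2+\sqrt2$ for the first and last tropical roots arises because for $i=1$ there are no indices $j<k_0=0$, so only the tail series (geometric with ratio involving $r_1/r_2$) appears, and its critical radius is exactly $2+\sqrt2$; symmetrically for $i=q$ on the other side (by the reversal $x\mapsto 1/x$, $\sigma_j\mapsto\sigma_{n-j}$).

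For the lower-radius endpoint (showing $a(x)$ has exactly $m k_{i-1}$ zeros in $|x|<r_i/g$) I would apply the same scalar inequality with $\kappa=k_{i-1}$ and $r=r_i/g$, and for the upper endpoint ($m k_i$ zeros in $|x|<r_i g$) with $\kappa=k_i$ and $r=r_i g$; subtracting gives $m(k_i-k_{i-1})=mm_i$ eigenvalues in the closed annulus $\mathcal A(r_i/g,r_i g)$, which by strictness of the Rouché inequalities is in fact the stated count. I expect the main obstacle to be the bookkeeping in the worst-case series bound: one must verify that the Newton-polygon convexity constraints, together with the two one-sided separation hypotheses, really do bound \emph{every} off-edge coefficient $\sigma_j$ well enough that the two geometric series converge and sum to less than one at $r=1/g$ — in particular handling the ``boundary'' coefficients near $k_{i-1}$ and $k_i$, where the decay has only just begun, and checking that the resulting transcendental inequality for $(f,g)$ is genuinely feasible with the claimed numerical values. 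The reduction step itself is routine given Corollary \ref{cor1} and Theorem \ref{th7}; essentially all the work is in optimizing the scalar constants, which is where the precise bounds $12.11<f<12.12$ and $4.371<g<4.372$ come from.
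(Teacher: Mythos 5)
Your proposal is essentially correct, but it follows a genuinely different route from the paper. The paper scales to $\tilde A(y)=(\sigma_k r_i^k)^{-1}A(r_iy)$, splits $\tilde A=P+Q$ with $P$ equal to the \emph{whole edge block} $\sum_{j=k_{i-1}}^{k_i}\tilde A_jy^j$, localizes the nonzero eigenvalues of $P$ in $\mathcal A(1/2,2)$ via a Higham--Tisseur bound (Lemma \ref{cor2}), and then proves $P(y)^*P(y)-Q(y)^*Q(y)\succ 0$ on circles of radii $1/g$ and $g$ through a delicate decomposition into diagonal and Hermitian cross terms (Lemma \ref{lem4}) followed by a quadratic/discriminant analysis; it is precisely that analysis which produces the algebraic numbers $g=\frac{3+\sqrt3}{2}+\sqrt{2+\frac{7\sqrt3}{6}}\simeq 4.3712$, $1/f\simeq 0.08255$, and the threshold $2+\sqrt2$ (from requiring $r^2-4r+2>0$, resp.\ $2r^2-4r+1>0$) for the extreme edges. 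You instead compare $\tilde A$ with the \emph{single vertex monomial}, i.e.\ you reuse the Pellet machinery of Lemma \ref{lem1}/Theorem \ref{thm1} (for $\mathcal Q_{m,n}$ one has $\|A_\kappa^{-1}A_j\|=\sigma_j/\sigma_\kappa$), and you control all remaining coefficients by the Newton-polygon convexity bounds $\tilde\sigma_j\le\epsilon^{k-j}$, $\tilde\sigma_j\le 1$, $\tilde\sigma_j\le\delta^{j-h}$ (the paper's Lemma \ref{lem3}) summed as geometric series. This does work: at $r=g\in(4.371,4.372)$ with $\kappa=k_i$ (and symmetrically at $r=1/g$ with $\kappa=k_{i-1}$) your worst-case bound is $\frac{1}{f/g-1}+\frac{1}{g-1}+\frac{1}{g(fg-1)}\simeq 0.566+0.297+0.004<1$, and for the extreme edges at $r=2+\sqrt2$ (resp.\ $1/(2+\sqrt2)$) one gets roughly $0.42<1$; strictness excludes eigenvalues on the two circles, so subtraction gives exactly $mm_i$ eigenvalues in the closed annulus. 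The gain of your route is simplicity (no $P^*P-Q^*Q$ estimate, no treatment of the cross term $H_1$); the loss is sharpness of the constants in principle, e.g.\ your bound cannot recover the scalar pair $(f,g)=(9,3)$ of Theorem \ref{sha}, where it fails marginally. One inaccuracy to fix: the values $12.11<f<12.12$, $4.371<g<4.372$ and $g'=2+\sqrt2$ are \emph{not} the ``tightest feasible values'' of your geometric-series conditions (your conditions hold there with ample slack, and would even be feasible for, say, a smaller $f$ paired with a smaller $g$); they are artifacts of the paper's discriminant optimization. Since the theorem only asserts the existence of constants in the stated ranges, you should simply verify your inequalities numerically at those values, as above, rather than claim they arise as optima of your bound.
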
\ \smallskip

Observe that in the scalar case the values of the constants $f$ and
$g$ are given by $f=9$ and $g=3$ which are slightly better.

\section{The proofs} \label{proofs}
The key tool on which the proofs of our results rely is the
generalization of Rouch\'e theorem to the case of matrix polynomials
provided in \cite{gs71}, see also \cite{rouche}. In
this statement and throughout, we use the notation $H\succ 0$ if the
Hermitian matrix $H$ is positive definite.

\begin{theorem}\label{rthm}
   Let $P(x)$ and $Q(x)$ be square matrix polynomials, and $\Gamma$ be
   a simple closed Jordan curve. If $P(x)^* P(x) - Q(x)^* Q(x)\succ 0$
   for all $x \in \Gamma$, then the polynomials $p(x):=\det(P(x))$ and
   $f(x):=\det(P(x)+Q(x))$ have the same number of roots in the open
   set bounded by $\Gamma$.
\end{theorem}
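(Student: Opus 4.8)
The plan is to treat this as the finite-dimensional, polynomial instance of the Gohberg--Sigal logarithmic-residue theorem and to give a self-contained homotopy argument; alternatively the statement can simply be quoted from \cite{gs71}, see also \cite{rouche}. First I would note that the hypothesis forces $P(x)$ to be invertible at every point of $\Gamma$: if $P(x)v=0$ with $v\ne 0$, then $v^*\bigl(P(x)^*P(x)-Q(x)^*Q(x)\bigr)v=-\|Q(x)v\|^2\le 0$, contradicting positive definiteness. Next, conjugating the inequality $P(x)^*P(x)\succ Q(x)^*Q(x)$ by $P(x)^{-1}$ turns it into $I\succ\bigl(Q(x)P(x)^{-1}\bigr)^*\bigl(Q(x)P(x)^{-1}\bigr)$, i.e. $\|Q(x)P(x)^{-1}\|<1$ for each $x\in\Gamma$; since $\Gamma$ is compact and $x\mapsto\|Q(x)P(x)^{-1}\|$ is continuous on $\Gamma$, this yields a uniform bound $\|Q(x)P(x)^{-1}\|\le 1-\delta<1$ on $\Gamma$ for some $\delta>0$. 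In particular $\det P(x)\ne 0$ and $\det\bigl(P(x)+Q(x)\bigr)=\det P(x)\,\det\bigl(I+Q(x)P(x)^{-1}\bigr)\ne 0$ on $\Gamma$, so neither $p$ nor $f$ is the zero polynomial and $\Gamma$ passes through none of their roots.

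The core step is a continuity-in-$t$ argument along the homotopy $P_t(x):=P(x)+tQ(x)=P(x)\bigl(I+tQ(x)P(x)^{-1}\bigr)$, $t\in[0,1]$. Since $\|tQ(x)P(x)^{-1}\|\le 1-\delta$, the factor $I+tQ(x)P(x)^{-1}$ stays invertible, hence $\det P_t(x)\ne 0$ for all $x\in\Gamma$ and all $t\in[0,1]$; in particular $\det P_t$ is not identically zero. By Jacobi's formula $\frac{d}{dx}\det P_t(x)=\det P_t(x)\,\trace\bigl(P_t(x)^{-1}\frac{d}{dx}P_t(x)\bigr)$ wherever $P_t(x)$ is invertible, so the argument principle gives that the number of roots of $\det P_t$ inside $\Gamma$, counted with multiplicity, equals
\[
N(t)=\frac{1}{2\pi{\bf i}}\oint_\Gamma\trace\!\left(P_t(x)^{-1}\,\frac{d}{dx}P_t(x)\right)dx.
\]
Because the integrand is jointly continuous in $(x,t)$ on $\Gamma\times[0,1]$ (the denominator never vanishes) and $\Gamma$ is rectifiable, $N$ is a continuous integer-valued function of $t$, hence constant; therefore $N(0)=N(1)$, which is exactly the assertion that $p=\det P_0$ and $f=\det P_1$ have the same number of roots inside $\Gamma$.

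The one point I would be careful about is that this cannot be reduced to the classical scalar Rouch\'e theorem applied directly to $p$ and $f$: writing $f/p=\det\bigl(I+QP^{-1}\bigr)=\prod_j(1+\mu_j)$ over the eigenvalues $\mu_j$ of $QP^{-1}$, each of modulus $<1$, this product may still leave the disc $\{|z-1|<1\}$, so the naive inequality $|f-p|<|p|$ on $\Gamma$ need not hold. The homotopy circumvents this; equivalently, one may argue that the closed curve $x\mapsto\det\bigl(I+Q(x)P(x)^{-1}\bigr)$ has winding number $0$ about the origin (deform $Q$ continuously to $0$, never meeting $0$), so $f$ and $p$ have equal winding number around $0$ along $\Gamma$ and the conclusion again follows from the argument principle. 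The remaining ingredients --- invertibility of $P$ on $\Gamma$, the implication $P^*P\succ Q^*Q\Rightarrow\|QP^{-1}\|<1$, Jacobi's formula, and continuity of $N$ --- are all routine.
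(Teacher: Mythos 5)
Your proposal is correct, but it is worth noting that the paper itself does not prove Theorem \ref{rthm} at all: it is quoted as a known result from Gohberg and Sigal \cite{gs71} (see also \cite{rouche}), whose original argument is an operator-valued logarithmic-residue theorem. What you supply is a self-contained proof of the finite-dimensional polynomial case, and it holds up: the positivity hypothesis does force invertibility of $P$ on $\Gamma$ and, after conjugation by $P(x)^{-1}$, gives $\|Q(x)P(x)^{-1}\|<1$, uniformly by compactness; the homotopy $P_t=P+tQ=P\,(I+tQP^{-1})$ then never has vanishing determinant on $\Gamma$, and the zero count $N(t)=\frac{1}{2\pi{\bf i}}\oint_\Gamma \trace\bigl(P_t(x)^{-1}P_t'(x)\bigr)\,dx$ is continuous and integer-valued, hence constant, which is exactly the claim. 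Your cautionary remark that one cannot simply feed $p$ and $f$ to the scalar Rouch\'e inequality $|f-p|<|p|$ is also well taken, since $\det(I+QP^{-1})$ is a product of factors in the disc $\{|z-1|<1\}$ and such a product can leave that disc. The only wrinkle is your parenthetical appeal to rectifiability of $\Gamma$: a simple closed Jordan curve need not be rectifiable, so the contour-integral form of the argument principle either needs that extra hypothesis or should be replaced by the winding-number formulation; your closing alternative (the curve $x\mapsto\det(I+Q(x)P(x)^{-1})$ is homotopic to the constant $1$ avoiding the origin, so $p\circ\gamma$ and $f\circ\gamma$ have equal winding number about $0$) handles arbitrary Jordan curves and closes this gap, and in the paper's applications $\Gamma$ is always a circle anyway. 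Compared with citing \cite{gs71}, your route buys an elementary, finite-dimensional argument using only Jacobi's formula and homotopy invariance, at the cost of less generality than the operator-theoretic original.
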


We provide the proofs of the results listed in Section
\ref{result}. Assume that $\det A_\kappa\ne 0$ and consider the matrix
polynomial $\widehat A(x)=\sum_{i=0}^nA_\kappa^{-1}A_ix^i$ which has the
same eigenvalues as $A(x)$.  We start simply by applying Theorem \ref{rthm}
to the matrix polynomials $P(x)=x^\kappa I$ and $Q(x)=\widehat A(x)-P(x)$
where $\Gamma$ is the circle of center $0$ and radius $r$.
 
The condition $P(x)^* P(x)-Q(x)^* Q(x)\succ 0$ turns into
$|x|^{2\kappa}I- Q(x)^* Q(x)\succ 0$. Moreover, since 
\[
\begin{split}
\rho(Q(x)^*Q(x))=&\|Q(x)^*Q(x)\|\le
\|Q(x)^*\|\cdot\|Q(x)\|=\|Q(x)\|^2\\
\le&(\sum_{i=0,\, i\ne \kappa}^n\|A_\kappa^{-1}A_i\|\cdot |x|^i)^2,
\end{split}
\]
 we deduce that the condition $P(x)^*P(x)-Q(x)^*Q(x)\succ 0$ is
 implied by
\begin{equation}\label{eq1}
|x|^\kappa>\sum_{i=0,\,i\ne \kappa}^n \|A_\kappa^{-1}A_i\|\cdot |x|^i.
\end{equation}

Thus, we may conclude with the following 
\begin{lemma}\label{lem1}
  If \eqref{eq1} is satisfied for $|x|=r$, then $A(x)$ has $m\kappa$ eigenvalues
  in the disk of center 0 and radius $r$.
\end{lemma}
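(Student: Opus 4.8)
The plan is to read the lemma as an immediate wrap-up of the computation just carried out, combined with the matrix Rouch\'e theorem (Theorem~\ref{rthm}). First I would fix $r>0$ and recall that, since $\det A_\kappa\neq 0$, the matrix polynomial $\widehat A(x)=\sum_{i=0}^n A_\kappa^{-1}A_i x^i$ satisfies $\det\widehat A(x)=\det(A_\kappa^{-1})\,\det A(x)$, so it has exactly the same roots, with multiplicities, as $a(x)=\det A(x)$; hence it suffices to count the roots of $\det\widehat A(x)$ inside the circle $|x|=r$.

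Next I would set $P(x)=x^\kappa I$ and $Q(x)=\widehat A(x)-P(x)$, and take $\Gamma=\{x:|x|=r\}$. The key bookkeeping point is that the $\kappa$-th matrix coefficient of $Q(x)$ is $A_\kappa^{-1}A_\kappa-I=0$, so $Q(x)=\sum_{i=0,\,i\neq\kappa}^n A_\kappa^{-1}A_i x^i$ is a genuine matrix polynomial and Theorem~\ref{rthm} applies to the pair $P,Q$. On $\Gamma$ the hypothesis $P(x)^*P(x)-Q(x)^*Q(x)\succ 0$ is equivalent to $|x|^{2\kappa}I-Q(x)^*Q(x)\succ 0$, i.e. to $\rho(Q(x)^*Q(x))<|x|^{2\kappa}$; by the chain $\rho(Q(x)^*Q(x))=\|Q(x)\|^2\le\bigl(\sum_{i\neq\kappa}\|A_\kappa^{-1}A_i\|\,|x|^i\bigr)^2$ established in the preceding paragraph, this is implied precisely by the strict inequality \eqref{eq1} holding at $|x|=r$.

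I would then invoke Theorem~\ref{rthm} to conclude that $p(x):=\det P(x)$ and $f(x):=\det(P(x)+Q(x))=\det\widehat A(x)$ have the same number of roots in the open region bounded by $\Gamma$. Since $p(x)=\det(x^\kappa I)=x^{m\kappa}$ has the single root $x=0$ of multiplicity $m\kappa$ and $r>0$, that number is $m\kappa$; hence $\det\widehat A(x)$, and therefore $a(x)$, has $m\kappa$ roots of modulus less than $r$. Because \eqref{eq1} is strict on $|x|=r$, the same estimate gives $|x|^{2\kappa}I-Q(x)^*Q(x)\succ 0$ there, so in particular $\det\widehat A(x)$ does not vanish on $|x|=r$; thus ``in the disk of center $0$ and radius $r$'' is unambiguous (open or closed yields the same count), which is the assertion.

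As for the main obstacle: there is essentially none beyond this bookkeeping. The only steps deserving a word of care are (i) that $Q$ has degree $\le n$ with vanishing $\kappa$-th coefficient, so that Theorem~\ref{rthm}, stated for matrix polynomials, applies verbatim, and (ii) that the submultiplicativity estimate $\|Q(x)\|\le\sum_{i\neq\kappa}\|A_\kappa^{-1}A_i\|\,|x|^i$ combines the triangle inequality and $\|AB\|\le\|A\|\,\|B\|$ for the $2$-norm; the identity $\rho(H)=\|H\|$ for Hermitian $H$ then turns the positive-definiteness condition into the scalar inequality \eqref{eq1}. Everything else is a direct application of the already-quoted Rouch\'e theorem.
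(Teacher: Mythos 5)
Your proposal is correct and follows the same route as the paper: the same choice $P(x)=x^\kappa I$, $Q(x)=\widehat A(x)-P(x)$, the same bound $\rho(Q(x)^*Q(x))\le\bigl(\sum_{i\ne\kappa}\|A_\kappa^{-1}A_i\|\,|x|^i\bigr)^2$ turning \eqref{eq1} into the positive-definiteness hypothesis, and the same application of Theorem~\ref{rthm} with $\det(x^\kappa I)=x^{m\kappa}$ giving the count $m\kappa$. Your added remarks (vanishing $\kappa$-th coefficient of $Q$, strictness on $\Gamma$ making the open/closed disk distinction immaterial) are harmless elaborations of what the paper leaves implicit.
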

\begin{proof}
If \eqref{eq1} is satisfied for $|x|=r$, then $P(x)P^*(x)\succ
Q(x)Q^*(x)$ for $P(x)=x^\kappa I$, $Q(x)=\widehat A(x)-P(x)$ and
$|x|=r$. Therefore, by Theorem \ref{rthm}, the matrix polynomial
$P(x)+Q(x)=\widehat A(x)$ has as many eigenvalues of modulus less than
$r$ as the matrix polynomial $P(x)=x^\kappa I$, that is $m\kappa$.
\end{proof}\ \\

We recall this known result which in \cite{bini96} is proved by
induction on $\kappa$.

\begin{lemma}\label{lem2}
  Let $w(x)=\sum_{i=0}^n w_ix^i$, $w_0,w_n> 0$, $w_i\ge 0$. The
  equation $w_\kappa x^\kappa=\sum_{i=0,\,i\ne \kappa}^nw_ix^i$ has only one real
  positive solution if $\kappa\in\{0,n\}$, and either 2 or no real positive
  solutions if $0<\kappa<n$.
\end{lemma}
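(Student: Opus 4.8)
The plan is to reduce the statement to a one–variable convexity argument. For $x>0$ one may divide the equation $w_\kappa x^\kappa=\sum_{i\ne\kappa}w_ix^i$ by $x^\kappa$, so that its positive solutions correspond exactly to the solutions of $\phi(x)=w_\kappa$, where
\[
\phi(x):=\sum_{i=0,\,i\ne\kappa}^n w_i\,x^{i-\kappa}
\]
is a finite linear combination, with nonnegative coefficients, of integer powers of $x$ (positive and negative). Throughout I assume $n\ge 1$; the degenerate case $n=0$ is vacuous.

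First I would record the elementary behaviour of $\phi$ on $(0,\infty)$. Differentiating term by term, $\phi''(x)=\sum_{i\ne\kappa}w_i\,(i-\kappa)(i-\kappa-1)\,x^{i-\kappa-2}$, and since $(i-\kappa)(i-\kappa-1)\ge 0$ for every integer $i\ne\kappa$ we get $\phi''\ge 0$, i.e. $\phi$ is convex on $(0,\infty)$; moreover when $0<\kappa<n$ the term $i=0$ contributes $w_0\,\kappa(\kappa+1)\,x^{-\kappa-2}>0$ (here the hypothesis $w_0>0$ is used), so $\phi$ is in fact strictly convex in that case.

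Then I would split into the three cases. If $\kappa=0$, then $\phi(x)=\sum_{i=1}^n w_ix^i$ is strictly increasing on $(0,\infty)$ (its derivative is positive because $w_n>0$), with $\phi(0^+)=0$ and $\phi(x)\to\infty$ as $x\to\infty$; hence $\phi(x)=w_0$ has exactly one positive solution, using $w_0>0$. The case $\kappa=n$ is symmetric under $x\mapsto 1/x$: $\phi(x)=\sum_{i=0}^{n-1}w_ix^{i-n}$ is strictly decreasing, with $\phi(0^+)=+\infty$ and $\phi(x)\to 0$ as $x\to\infty$, so $\phi(x)=w_n$ has exactly one positive solution. If $0<\kappa<n$, then the terms $w_0x^{-\kappa}$ and $w_nx^{n-\kappa}$ force $\phi(x)\to+\infty$ both as $x\to 0^+$ and as $x\to\infty$; a strictly convex function on an interval tending to $+\infty$ at both ends attains a unique global minimum $m^*=\phi(x^*)$, is strictly decreasing on $(0,x^*]$ and strictly increasing on $[x^*,\infty)$. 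Consequently $\{x>0:\phi(x)=w_\kappa\}$ is empty if $w_\kappa<m^*$, is the single point $x^*$ if $w_\kappa=m^*$, and consists of exactly two points $s_\kappa<t_\kappa$, one in each monotone branch, if $w_\kappa>m^*$. Identifying the tangential case $w_\kappa=m^*$ with the coincidence $s_\kappa=t_\kappa$ yields the asserted dichotomy: either two or no real positive solutions.

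The only mild obstacle is the bookkeeping with the endpoint limits — one must check that $\phi(x)\to+\infty$ as $x\to 0^+$ whenever a lower envelope is claimed and as $x\to\infty$ whenever an upper envelope is claimed — and this is precisely where the assumptions $w_0>0$ and $w_n>0$ enter; the rest is routine. (This replaces the induction on $\kappa$ used in \cite{bini96} by a direct convexity argument.)
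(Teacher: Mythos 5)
Your argument is correct. Note, however, that the paper does not prove Lemma~\ref{lem2} itself: it quotes it as a known fact, proved in \cite{bini96} by induction on $\kappa$, so your route is genuinely different. You divide by $x^\kappa$ and study $\phi(x)=\sum_{i\ne\kappa}w_ix^{i-\kappa}$, observing that $(i-\kappa)(i-\kappa-1)\ge 0$ for all integers $i\ne\kappa$, so $\phi$ is convex on $(0,\infty)$, strictly so when $0<\kappa<n$ thanks to the term $w_0x^{-\kappa}$; together with $\phi\to+\infty$ at both ends (which is exactly where $w_0>0$ and $w_n>0$ enter) this gives the decreasing/increasing dichotomy around the unique minimizer and hence $0$, $1$ (tangential) or $2$ intersections with the level $w_\kappa$, while the boundary cases $\kappa=0,n$ reduce to strict monotonicity. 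Your identification of the tangential case with $s_\kappa=t_\kappa$ is legitimate and consistent with the paper's convention ``$s_\kappa\le t_\kappa$'' in Theorem~\ref{walsh}: at a tangency point both $F(x)=x^\kappa\bigl(w_\kappa-\phi(x)\bigr)$ and $F'(x)$ vanish, so it is a positive root of multiplicity two. Compared with the induction of \cite{bini96} (or with a count of sign changes via Descartes' rule, which also yields ``$0$ or $2$''), your convexity argument is self-contained and gives slightly more structural information for free: the two solutions lie one on each monotone branch of $\phi$, so $s_\kappa\le x^*\le t_\kappa$ with $x^*$ the minimizer, and the same picture immediately yields the bounds $u_\kappa\le s_\kappa\le t_\kappa\le v_\kappa$ exploited later in Theorem~\ref{th6}. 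The only point to keep explicit, as you do, is the degenerate case $n=0$ and the endpoint limits; with those checked the proof is complete.
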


Applying Lemma \ref{lem2} to the equation $x^\kappa=\sum_{i=0,\,i\ne \kappa}^n
\|A_\kappa^{-1}A_i\|\cdot x^i$, in view of Lemma \ref{lem1} one obtains
Theorem \ref{thm1}.

Now, consider the set of indices $\mathcal H=\{h_0<h_1<\ldots<h_p\}$ such that
$\det A_\kappa\ne 0$ and the equation
\[
x^\kappa=\sum_{i=0,\,i\ne \kappa}^n \|A_\kappa^{-1}A_i\|\cdot x^i.
\]
has real positive solutions for $\kappa\in\mathcal H$. Denote $s_{h_i}\le t_{h_i}$, $i=0,\ldots,p$ 
these solutions, where we have set $s_{h_0}=0$ and $t_{h_p}=\infty$. 
Observe that if $\det A_0\ne0$ then $h_0=0$ and if $\det A_n\ne 0$ then $h_p=n$.

By applying Theorem \ref{thm1} one deduces that the closed disk of
center 0 and radius $s_{h_i}$ contains exactly $mh_i$ eigenvalues of $A(x)$
for $i=1,\ldots,p$, while there are none in the inner part of the annulus
 $\mathcal A(s_{h_i},t_{h_i})$. This implies that $s_{h_{i+1}}\ge
t_{h_i}$, that is, part 1 of Corollary \ref{cor1}, and that there are
$m(h_i-h_{i-1})$ eigenvalues in the annulus
$\mathcal A(t_{h_{i-1}},s_{h_i})$, i.e., part 2 of Corollary \ref{cor1}. Part 3
follows from a direct application of Theorem \ref{thm1} so that the
proof of Corollary \ref{cor1} is complete.

In the case where the matrix polynomial $A(x)$ belongs to $\mathcal Q_{m,n}$, 
we find that $A_\kappa^{-1}A_i=(\sigma_i/\sigma_\kappa)I$ so that condition \eqref{eq1} turns into
\[
|x|^\kappa>\sum_{i=0,\, i\ne \kappa}^n\left|\frac {\sigma_i}{\sigma_\kappa}\right|\cdot |x|^i.
\]
This way, the proof of Theorem \ref{th5} 
follows from Theorem \ref{thm1}.

\subsection{Proofs related to the Newton polygon}
Assume that $0<\kappa<n$ and that equation \eqref{(1)} has real positive solutions $s_\kappa\le t_\kappa$. Then, for any $x\in\{s_\kappa,t_\kappa\}$ one has
\[
x^\kappa= \sum_{i=0,\,i\ne \kappa}^n\|A_\kappa^{-1}A_i\| x^i\ge \|A_\kappa^{-1}A_j\| x^j
\]
for any $j\ne \kappa$. This implies that $x\ge \max_{j<\kappa}
\|A_\kappa^{-1}A_j\|^{1/(\kappa-j)}=u_\kappa$ and $x\le
\min_{j>\kappa}\|A_\kappa^{-1}A_j\|^{1/(\kappa-j)}=v_\kappa$. This proves the first part
of Theorem \ref{th6}. The cases $\kappa=0$ and $\kappa=n$ are treated similarly.

Now consider Theorem \ref{th7}. Parts 1 and 2 follow from Theorem \ref{th6}.
Concerning the inequality $v_{k_i}\le u_{k_{i+1}}$, we rely on the property
$\|H\|\ge 1/\|H^{-1}\|$ valid for any non-singular matrix $H$.
In fact, for the sake of simplicity, denote $k:=k_i$ and $h:=k_{i+1}$ so that $k<h$. Then 
\[\begin{split}
u_{h}=&\max_{j<h}\|A_h^{-1}A_j\|^{1/(h-j)}\ge\|A_h^{-1}A_k\|^{1/(h-k)}\ge\|(A_h^{-1}A_k)^{-1}\|^{-1/(h-k)}\\
& \ge \min_{j>k}\|(A_j^{-1}A_k)^{-1}\|^{-1/(j-k)}=\min_{j>k}\|A_k^{-1}A_j\|^{1/(k-j)}=v_k.
\end{split}\]
Concerning part 4, if $A(x)\in\mathcal Q_{m,n}$, then in view of Theorem \ref{th5}, the values of $k_i$ are the abscissas of the vertices of the Newton polygon for the polynomial $w(x)=\sum_{i=0}^n\sigma_i x^i$, therefore $v_{k_i}=u_{k_{i+1}}$,
and the proof of Theorem \ref{th7} is complete.

\subsection{Proof of Theorem \ref{shaext}}
Let $A(x)=\sum_{i=0}^n\sigma_iQ_i\in\mathcal Q_{m,n}$ and
$w(x)=\sum_{i=0}^n\sigma_i x^i$. 
Consider the Newton polygon corresponding to $w(x)$
where $k_{i-1}, k_{i}$ are the abscissas of two consecutive vertices 
corresponding to the $i$th edge.
Also, let $r_1,\dots,r_q$, denote the radii corresponding to different edges of 
the Newton polygon so that $r_1<r_2<\dots<r_q$.
Along the proof we refer to the radii as the tropical roots. Also, 
for the sake of notational simplicity we set $k:=k_{i-1}$ and $h:=k_{i}$.

Let $r_i$ be the tropical root corresponding to the $i$th edge of the Newton polygon and
consider the substitution $y=r_ix$. We define the matrix polynomial $\tilde{A}(y)=\sum_{j=0}^n\tilde{A_j}y^j$ as follows:
\begin{equation}
\label{eq:scaled}
\tilde{A}(y)=(\sigma_{k}r_i^{k})^{-1}(\sum_{j=0}^nA_j(r_iy)^j)\enspace.
\end{equation}
Notice that $\lambda$ is an eigenvalue of $\tilde{A}(y)$ if and only if $r_i\lambda$ is an eigenvalue of $A(x)$.
The scaled matrix polynomial $\tilde{A}(y)$ has the following property.
\begin{lemma}[Corollary of~{\cite[Lemma~3.3.2]{sha}}]
\label{lem3}
Let $\tilde{A}(y)$ be the matrix polynomial defined in~\eqref{eq:scaled} and let 
$\tilde{\sigma_j}:=\|\tilde{A_j}\|$ for $j=0,\dots,n$. Also let 
$\epsilon:= r_{i-1}/r_{i}$, $\delta:=r_{i}/r_{i+1}$, 
be the parameters measuring the separation between $r_i$ and the previous and the next tropical roots, $r_{i-1}$ and $r_{i+1}$, respectively.
We have:
\[
\tilde{\sigma_j}\leq
\begin{cases}
\epsilon^{k-j} & \mathrm{if} \quad  j<k,\\
 1 &\mathrm{if} \quad k\leq j \leq h, \\
\delta^{j-h} & \mathrm{if} \quad j>h.
\end{cases}
\]
\end{lemma}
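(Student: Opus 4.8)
The plan is to reduce the statement to three elementary facts about the Newton polygon $N$ of $w(x)=\sum_{j=0}^n\sigma_j x^j$, viewed as the concave piecewise-linear function on $[0,n]$ whose graph is the upper convex hull of the points $(j,\log\sigma_j)$. First I would compute the norms of the scaled coefficients: since each $Q_j$ is an isometry, $\tilde\sigma_j=\|\tilde A_j\|=\|(\sigma_k r_i^k)^{-1}\sigma_j r_i^j Q_j\|=\sigma_j r_i^j/(\sigma_k r_i^k)$, so in logarithmic coordinates $\log\tilde\sigma_j=\log\sigma_j-\log\sigma_k+(j-k)\log r_i$; the three claimed bounds thus become linear inequalities in $\log\sigma_j$. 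Next I would record the facts used throughout: (i) $\log\sigma_j\le N(j)$ for every $j$, with equality whenever $j$ is one of the vertices $k_0<\dots<k_q$ (in particular $\log\sigma_k=N(k)$ and $\log\sigma_h=N(h)$); (ii) the $\ell$-th edge of $N$, joining $(k_{\ell-1},N(k_{\ell-1}))$ to $(k_\ell,N(k_\ell))$, has slope $-\log r_\ell$; (iii) since $r_1<\dots<r_q$ these slopes are strictly decreasing, so $N$ is concave and lies below the line through any vertex having slope equal to that of either adjacent edge.

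For $k\le j\le h$ I would use that $N$ is \emph{linear} on the $i$-th edge, so $\log\sigma_j\le N(j)=N(k)-(j-k)\log r_i=\log\sigma_k-(j-k)\log r_i$; substituting into $\log\tilde\sigma_j$ gives $\log\tilde\sigma_j\le 0$, i.e.\ $\tilde\sigma_j\le 1$. Taking $j=h$ yields the identity $\log\sigma_h=\log\sigma_k-(h-k)\log r_i$, which I use next. For $j>h$, concavity bounds $N$ by the line through $(h,N(h))$ of slope $-\log r_{i+1}$ (the slope of the $(i+1)$-st edge), so $\log\sigma_j\le N(h)-(j-h)\log r_{i+1}$; inserting the identity for $\log\sigma_h$ and simplifying, the $\log\sigma_k$ and $\log r_i$ contributions combine so that $\log\tilde\sigma_j\le(j-h)\log(r_i/r_{i+1})=(j-h)\log\delta$, i.e.\ $\tilde\sigma_j\le\delta^{j-h}$. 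Symmetrically, for $j<k$ concavity bounds $N$ by the line through $(k,N(k))$ of slope $-\log r_{i-1}$, giving $\log\sigma_j\le\log\sigma_k+(k-j)\log r_{i-1}$ and hence $\log\tilde\sigma_j\le(k-j)\log(r_{i-1}/r_i)=(k-j)\log\epsilon$, i.e.\ $\tilde\sigma_j\le\epsilon^{k-j}$.

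Finally I would observe that the extreme edges need no extra argument: if $i=1$ then $k=k_0$ and there is no index $j<k$, while if $i=q$ then $h=k_q=n$ and there is no index $j>h$, so the first (resp.\ last) case is vacuous and the parameters $\epsilon$ (resp.\ $\delta$) are only invoked when defined. I do not expect a genuine obstacle — the whole argument is bookkeeping with a concave function — but the one point to handle carefully is the direction of the concavity estimates for $j<k$ and $j>h$: the sharp exponents $\epsilon^{k-j}$ and $\delta^{j-h}$ come from comparing $N$ with the \emph{adjacent} edge's slope, not with the $i$-th edge's slope (the latter would only give the weaker $\tilde\sigma_j\le 1$). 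Since this lemma is the specialization of \cite[Lemma~3.3.2]{sha} to the class $\mathcal Q_{m,n}$, where $\|A_j\|=\sigma_j$ makes the scaled norms exactly $\sigma_j r_i^j/(\sigma_k r_i^k)$, one may alternatively simply invoke that result once fact (i) above is noted.
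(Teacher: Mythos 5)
Your proposal is correct and is essentially the paper's argument: both reduce to the identity $\tilde\sigma_j=\sigma_j r_i^j/(\sigma_k r_i^k)$ (valid because the $Q_j$ are unitary) followed by the coefficient bounds $\sigma_j\le\sigma_k r_{i-1}^{k-j}$ for $j<k$, $\sigma_j\le\sigma_k r_i^{k-j}$ for $k\le j\le h$, and $\sigma_j\le\sigma_h r_{i+1}^{h-j}$ for $j>h$. The only difference is that the paper imports these bounds from \cite[Lemma~3.3.2]{sha}, whereas you rederive them from the concavity of the Newton polygon of $w(x)$, which makes your write-up self-contained but otherwise identical in substance (your observation that the cases $j<k$ for $i=1$ and $j>h$ for $i=q$ are vacuous is also consistent with how the lemma is used later).
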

\begin{proof}
We only prove the first inequality 
since the other ones can be established by using a similar argument.
Note that $\tilde{\sigma_j}=(\sigma_{k}r_i^{k})^{-1}\sigma_jr_i^j$. 
Due to~{\cite[Lemma~3.3.2]{sha}},
$\sigma_j\leq \sigma_{k}r_{i-1}^{k-j}$ for all $0\leq j<k$. Thus,
\[
\tilde{\sigma_j}\leq 
(\sigma_{k}r_i^{k})^{-1}  \sigma_{k}r_{i-1}^{k-j}    r_i^j
= (\frac{r_{i-1}}{r_i})^{k-j}=\epsilon^{k-j}.
\]
\end{proof}
Now consider the decomposition of $\tilde{A}(y)$ as the sum of two matrix polynomials:
\[
 P(y)=\sum_{j=k}^h \tilde{A}_j y^j,\qquad
 Q(y)= \sum_{j=0}^{k-1} \tilde{A}_j y^j + \sum_{j=h+1}^n \tilde{A}_j y^j\enspace.
\]
\begin{remark}\rm
 Notice that, although for the sake of notational simplicity we do not explicitly express this dependence, 
the definitions of $\tilde{A}(y)$, $P(y)$ and $Q(y)$ depend on which edge 
of the Newton polygon we are considering.
\end{remark}
The following lemma provides the upper and lower bounds to the moduli 
of the eigenvalues of $P(y)$.
\begin{lemma}[Corollary of~{\cite[Lemma 4.1]{ht01}}]
\label{cor2}
All the nonzero eigenvalues of $P(y)$ lie in the annulus $\mathcal{A}(1/2,2)$. 
\end{lemma}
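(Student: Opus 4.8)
The plan is to reduce Lemma~\ref{cor2} to the Cauchy-type eigenvalue bounds of Higham and Tisseur~\cite[Lemma~4.1]{ht01}. The coefficients of $P(y)$ are supported on the degrees $k,\dots,h$, so I would first extract the common power and write $P(y)=y^kR(y)$ with $R(y)=\sum_{l=0}^{N}\tilde A_{k+l}\,y^l$ and $N:=h-k\ge 1$. Because $\det P(y)=y^{km}\det R(y)$ and $\det R(0)=\det\tilde A_k\ne 0$, the nonzero eigenvalues of $P(y)$ are precisely the eigenvalues of $R(y)$, so it is enough to show that every eigenvalue of $R(y)$ lies in $\mathcal A(1/2,2)$.

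I would then record the facts about $R$ that make the bound work. From the definition~\eqref{eq:scaled} of $\tilde A$ together with $r_i=(\sigma_k/\sigma_h)^{1/(h-k)}$ (cf.~\eqref{eq:ri} applied to $w$), a one-line computation gives $\tilde A_k=Q_k$ and $\tilde A_h=Q_h$; since $A(x)\in\mathcal Q_{m,n}$ these are unitary, so the trailing and leading coefficients $R_0=Q_k$, $R_N=Q_h$ of $R$ are nonsingular with $\|R_0^{-1}\|=\|R_N^{-1}\|=1$, and $R$ is regular of degree exactly $N$. Moreover Lemma~\ref{lem3} gives $\|R_l\|=\tilde\sigma_{k+l}\le 1$ for all $0\le l\le N$. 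Applying~\cite[Lemma~4.1]{ht01} to $R$, every eigenvalue $\mu$ satisfies $\tau\le|\mu|\le\rho$, where $\rho$ is the positive root of $x^N=\sum_{l=0}^{N-1}\|R_l\|\,x^l$ and $\tau$ is the positive root of $1=\sum_{l=1}^{N}\|R_l\|\,x^l$ (here I used $\|R_0^{-1}\|=\|R_N^{-1}\|=1$). Since $\|R_l\|\le 1$, at $x=2$ one has $\sum_{l=0}^{N-1}2^l=2^N-1<2^N$ and at $x=1/2$ one has $\sum_{l=1}^{N}2^{-l}=1-2^{-N}<1$; as the two defining functions are strictly increasing on $(0,\infty)$, this forces $\rho<2$ and $\tau>1/2$. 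Hence $|\mu|\in[1/2,2]$, and the same therefore holds for every nonzero eigenvalue of $P(y)$.

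The argument is routine; the only place that needs a moment of care is the bookkeeping around the extracted factor $y^k$ and, above all, the observation that after the scaling~\eqref{eq:scaled} the extreme coefficients of $P$ are \emph{exactly} unitary. This is what yields $\|R_0^{-1}\|=\|R_N^{-1}\|=1$, which the proof genuinely needs: Lemma~\ref{lem3} by itself bounds $\tilde\sigma_k,\tilde\sigma_h$ only from above and gives no control over the inverses.
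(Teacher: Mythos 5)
Your proof is correct, and it follows the same overall route as the paper: factor out $y^k$, apply a Higham--Tisseur eigenvalue bound to the scaled ``middle'' polynomial, and feed in Lemma~\ref{lem3}. The one genuine difference is the key lemma you invoke from \cite{ht01}. The paper applies the bound of \cite[Lemma~4.1]{ht01}, whose constants come directly from the factors $(1+\|\tilde A_k^{-1}\|)^{-1}$ and $(1+\|\tilde A_h^{-1}\|)$, so that Lemma~\ref{lem3} together with $\|\tilde A_k^{-1}\|=\|\tilde A_h^{-1}\|=1$ immediately yields the radii $1/2$ and $2$ with no further computation. You instead use the Cauchy/Pellet-type bound --- which is actually \cite[Lemma~3.1]{ht01}, the one quoted earlier in the paper, not Lemma~4.1 as you label it; this is only a citation slip, not a mathematical gap --- and then you must additionally estimate the two scalar roots, which you do correctly via the geometric-series evaluations at $x=2$ and $x=1/2$, giving the slightly stronger conclusion that the eigenvalues lie strictly inside $\mathcal A(1/2,2)$. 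A welcome feature of your write-up is that you verify explicitly, using $r_i=(\sigma_k/\sigma_h)^{1/(h-k)}$ in \eqref{eq:scaled}, that $\tilde A_k=Q_k$ and $\tilde A_h=Q_h$ are exactly unitary; the paper's proof uses this fact only implicitly, and, as you note, Lemma~\ref{lem3} alone bounds $\tilde\sigma_k,\tilde\sigma_h$ from above and gives no control of the inverse norms, so making this point explicit is needed in either version of the argument.
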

\begin{proof}
Consider the polynomial $y^{-k} P(y)$.  Due to~{\cite[Lemma 4.1]{ht01}} we have:
\[
(1+\|\tilde{A}_k^{-1}\|)^{-1}\min_{k\leq j\leq h}\|\tilde{A}_j\|^{-1/j}\leq |\lambda| \leq
(1+\|\tilde{A}_h^{-1}\|)\max_{k\leq j\leq h}\|\tilde{A}_j\|^{1/(h-k-j)}
\]
where $\lambda$ is any eigenvalue of $y^{-k} P(y)$.
The result is established by applying Lemma \ref{lem3} to the above inequalities.
\end{proof}

The idea of the proof is to 
look for the conditions on $\epsilon= r_{i-1}/r_{i}$ and $\delta=r_{i}/r_{i+1}$ 
such that $P(y)^* P(y)-Q(y)^* Q(y)\succ 0$ holds on the boundaries of two disks of center zero 
and radius $r_1<1/2$ and $r_2>2$. 
Then, by the generalized Rouch\'e theorem (Theorem~\ref{rthm}), 
$P(y)$ and $\tilde{A}(y)$ will have the same number of eigenvalues inside these disks. Using Lemma~\ref{cor2}, this implies that
$\tilde{A}(y)$ has $m(h-k)$ eigenvalues which lie in the annulus $\mathcal{A}(r_1,r_2)$; therefore,   
$A(x)$ has $m(h-k)$ eigenvalues which lie in the annulus $\mathcal{A}(r_ir_1,r_ir_2)$.
This argument is akin to the one which is used in~\cite[Chapter~3]{sha} to prove Theorem~\ref{sha}, valid for scalar
polynomials. The proof relies on the following lemmas.

\begin{lemma}\label{lem4}
Let $r:=|y|$ and $\ell:=h-k$. Also, define the diagonal matrices  
$D_i = d_i I_m$, $i=1,\dots, 3$, and the Hermitian matrices 
$H_i$, $i=1\dots 4$, as follows:
\begin{eqnarray*}
D_1(y)&:=&(\sum_{j=k}^h \|\tilde{A}_j\|^2 |y|^{2j}) I_m\enspace,
\quad 
D_2(y):=(\sum_{j=0}^{k-1} \|\tilde{A}_j\|^2 |y|^{2j}) I_m\enspace,\\
D_3(y)&:=&(\sum_{j=h+1}^n \|\tilde{A}_j\|^2 |y|^{2j}) I_m\enspace;\\
H_1(y)&:=&\sum_{k \leq j_1 < j_2 \leq h} \tilde{A}_{j_1}^* \tilde{A}_{j_2} (y^*)^{j_1} y^{j_2} +
 \left(\sum_{k \leq j_1 < j_2 \leq h} \tilde{A}_{j_1}^* \tilde{A}_{j_2} (y^*)^{j_1} y^{j_2}\right)^*,\\
H_2(y)&:=&\sum_{0 \leq j_1 < j_2 \leq k-1} \tilde{A}_{j_1}^* \tilde{A}_{j_2} (y^*)^{j_1} y^{j_2} +
 \left(\sum_{0 \leq j_1 < j_2 \leq k-1} \tilde{A}_{j_1}^* \tilde{A}_{j_2} (y^*)^{j_1} y^{j_2}\right)^*,\\
H_3(y)&:=&\sum_{h+1 \leq j_1 < j_2 \leq n} \tilde{A}_{j_1}^* \tilde{A}_{j_2} (y^*)^{j_1} y^{j_2} +
 \left(\sum_{h+1 \leq j_1 < j_2 \leq n} \tilde{A}_{j_1}^* \tilde{A}_{j_2} (y^*)^{j_1} y^{j_2}\right)^*,\\
H_4(y)&:=&\sum_{j_1=0}^{k-1}\sum_{j_2=h+1}^n \tilde{A}_{j_1}^* \tilde{A}_{j_2} (y^*)^{j_1} y^{j_2} +
\left(\sum_{j_1=0}^{k-1}\sum_{j_2=h+1}^n \tilde{A}_{j_1}^* \tilde{A}_{j_2} (y^*)^{j_1} y^{j_2}\right)^*. 
\end{eqnarray*}
Then we have:
\begin{enumerate} 
\item $[P(y)]^* P(y) - [Q(y)]^* Q(y) = D_1 (y) + H_1 (y) -D_2 (y) - D_3 (y) - H_2 (y) - H_3 (y) - H_4 (y)$;
 \item if $r>1$, $d_1 \geq r^{2 h} \frac{r^2 - 2 + r^{-2\ell}}{r^2-1}$;
\item if $r<1$, $d_1 \geq r^{2 k} \frac{1-2 r^2 + r^{2\ell+2}}{1-r^2}$;
\item $\|H_1\| \leq 2 r^{k+1} \frac{r^{2h-k+1}-r^{h+1}-r^h+r^k}{(r^2-1)(r-1)}$;
\item $d_2 + d_3 + \sum_{i=2}^4 \|H_i\| \leq \left( \epsilon \frac{\epsilon^k - r^k}{\epsilon - r} + 
\delta r^{h+1} \frac{1-(r \delta)^{n-h}}{1 -r \delta} \right)^2$.
\end{enumerate}
\end{lemma}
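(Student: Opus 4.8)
The plan is to prove the five items in order, essentially by direct computation. The one structural ingredient, used throughout, is that $A(x)\in\mathcal{Q}_{m,n}$: since $A_j=\sigma_jQ_j$ with $Q_j^*Q_j=I$, the scaled coefficient $\tilde{A}_j$ of~\eqref{eq:scaled} is a scalar multiple of a unitary matrix, hence $\tilde{A}_j^*\tilde{A}_j=\|\tilde{A}_j\|^2I_m=\tilde\sigma_j^2I_m$. Moreover, since $k=k_{i-1}$ and $h=k_i$ are the endpoints of the $i$-th edge of the Newton polygon of $w(x)$ and~\eqref{eq:scaled} divides out exactly the coefficient attached to that edge, one gets $\tilde\sigma_k=\tilde\sigma_h=1$; this is the only lower bound on the $\tilde\sigma_j$'s that will be available, and it is all that is needed.

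For item~1 I would expand $P(y)^*P(y)=\sum_{j_1,j_2=k}^{h}\tilde{A}_{j_1}^*\tilde{A}_{j_2}(y^*)^{j_1}y^{j_2}$ and separate the diagonal part $j_1=j_2$, which by $\tilde{A}_j^*\tilde{A}_j=\tilde\sigma_j^2I_m$ equals $D_1(y)$, from the off-diagonal part, which after pairing $(j_1,j_2)$ with $(j_2,j_1)$ equals $H_1(y)$. Writing $Q(y)=Q_{\mathrm{lo}}(y)+Q_{\mathrm{hi}}(y)$ with $Q_{\mathrm{lo}}(y)=\sum_{j=0}^{k-1}\tilde{A}_jy^j$ and $Q_{\mathrm{hi}}(y)=\sum_{j=h+1}^{n}\tilde{A}_jy^j$, the same diagonal/off-diagonal splitting of $Q_{\mathrm{lo}}^*Q_{\mathrm{lo}}$, $Q_{\mathrm{hi}}^*Q_{\mathrm{hi}}$ and $Q_{\mathrm{lo}}^*Q_{\mathrm{hi}}+Q_{\mathrm{hi}}^*Q_{\mathrm{lo}}$ produces $D_2+H_2$, $D_3+H_3$ and $H_4$ respectively, giving item~1.

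Items~2 and~3 follow by keeping a single term of $d_1=\sum_{j=k}^{h}\tilde\sigma_j^2r^{2j}$: for $r>1$ keep $j=h$, so $d_1\ge\tilde\sigma_h^2r^{2h}=r^{2h}$; for $r<1$ keep $j=k$, so $d_1\ge r^{2k}$. The telescoping identities $r^{2h}\frac{r^2-2+r^{-2\ell}}{r^2-1}=r^{2h}-\sum_{j=k}^{h-1}r^{2j}$ and $r^{2k}\frac{1-2r^2+r^{2\ell+2}}{1-r^2}=r^{2k}-\sum_{j=k+1}^{h}r^{2j}$ then show these are the claimed bounds. For item~4, the triangle inequality and $\tilde\sigma_j\le1$ (Lemma~\ref{lem3}) give $\|H_1\|\le 2\sum_{k\le j_1<j_2\le h}r^{j_1+j_2}=(\sum_{j=k}^{h}r^j)^2-\sum_{j=k}^{h}r^{2j}$, and evaluating these two geometric sums (ratios $r$ and $r^2$) yields the stated rational expression. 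For item~5 I would apply the triangle inequality to $d_2,d_3,H_2,H_3,H_4$ \emph{before} bounding the $\tilde\sigma_j$, and recognize the outcome as a perfect square, $d_2+d_3+\sum_{i=2}^{4}\|H_i\|\le\bigl(\sum_{j=0}^{k-1}\tilde\sigma_jr^j+\sum_{j=h+1}^{n}\tilde\sigma_jr^j\bigr)^2$, the cross term $\|H_4\|$ supplying exactly $2\bigl(\sum_{j<k}\tilde\sigma_jr^j\bigr)\bigl(\sum_{j>h}\tilde\sigma_jr^j\bigr)$; only then do I invoke Lemma~\ref{lem3} in the sharper form $\tilde\sigma_j\le\epsilon^{k-j}$ ($j<k$), $\tilde\sigma_j\le\delta^{j-h}$ ($j>h$), so that summing the geometric series in $\epsilon/r$ and in $r\delta$ turns the two inner sums into $\epsilon\frac{\epsilon^k-r^k}{\epsilon-r}$ and $\delta r^{h+1}\frac{1-(r\delta)^{n-h}}{1-r\delta}$, which is item~5.

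The only genuine difficulty I anticipate is organizational: matching index ranges to the correct $D_i$ and $H_i$ in item~1, and collapsing the finite double sums into the displayed compact rational forms in items~2--5. A point worth a remark is that the denominators $r^2-1,\,r-1,\,\epsilon-r,\,1-r\delta$ may vanish; since every quantity above is a finite sum, the rational formulas are to be read as their removable limits, so no case distinction is required.
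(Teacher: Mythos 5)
Your proposal is correct, and for items 1, 4 and 5 it is essentially the paper's own argument: direct expansion with the diagonal/off-diagonal split for item 1, the triangle inequality plus Lemma~\ref{lem3} and a geometric-sum evaluation for item 4, and the recognition of the right-hand side as a perfect square (with the $H_4$ term supplying the cross product) for item 5 — the only cosmetic difference there is that you apply Lemma~\ref{lem3} after forming the square rather than termwise before, which is equivalent since all quantities are nonnegative. The genuine divergence is in items 2 and 3: the paper does not prove them at all but cites Lemma~3.3.5 of \cite{sha}, whereas you give a short self-contained argument based on the observation that the normalization \eqref{eq:scaled}, combined with the edge relation $r_i^{h-k}=\sigma_k/\sigma_h$ defining the tropical root (cf.~\eqref{eq:ri}), forces $\tilde\sigma_k=\tilde\sigma_h=1$; then $d_1\ge r^{2h}$ (for $r>1$) and $d_1\ge r^{2k}$ (for $r<1$) dominate the stated rational expressions, since these equal $r^{2h}-\sum_{j=k}^{h-1}r^{2j}$ and $r^{2k}-\sum_{j=k+1}^{h}r^{2j}$ respectively — identities I checked and which are correct. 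What your route buys is independence from the external reference and an explicit record of the one lower bound on the $\tilde\sigma_j$ that the whole lemma needs; what the paper's route buys is brevity. Two further points in your write-up are welcome additions rather than deviations: making explicit that $\tilde A_j^*\tilde A_j=\|\tilde A_j\|^2I_m$ (from $A(x)\in\mathcal Q_{m,n}$) is what turns the diagonal blocks into the scalar matrices $D_1,D_2,D_3$ in item 1, which the paper dismisses as ``easily verified,'' and the remark that the denominators $r^2-1$, $r-1$, $\epsilon-r$, $1-r\delta$ only produce removable singularities, since every bound is a finite (polynomial) sum in disguise.
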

\begin{proof}
The first equation is easily verified by direct computation. 

The proofs of inequalities 2. and 3. have been presented in~\cite[Lemma 3.3.5]{sha}.
\begin{enumerate}[leftmargin=0cm,itemindent=.5cm,labelwidth=\itemindent,labelsep=0cm,align=left]
\item[4.]
Note that
\begin{eqnarray*}
\|H_1\| \leq 2 \sum_{k \leq j_1 < j_2 \leq h} \|\tilde{A}_{j_1}\|\|\tilde{A}_{j_2}\| r^{j_1+j_2}
&\leq& 2 \sum_{k \leq j_1 < j_2 \leq h} r^{j_1+j_2} \quad { \textit{ (using Lemma~\ref{lem3})}}\\
&=&  2 (1-r)^{-1} \sum_{j_1 = k}^{h-1}  r^{2j_1+1} (1-r^{h-j_1})\enspace.
\end{eqnarray*}
Taking the sum over $j_1$ completes the proof. 
\item[5.]
It follows from Lemma~\ref{lem3} that
\begin{eqnarray*}
&&d_2 \leq \epsilon^{2k} \sum_{j=0}^{k-1} (r/\epsilon)^{2j},\quad
 d_3 \leq \delta^{-2 h} \sum_{j=h+1}^n (\delta r)^{2j},\quad
\|H_2\|\leq 2\epsilon^{2k} \sum_{0 \leq j_1 < j_2 \leq k-1} (r/\epsilon)^{j_1+j_2}, \\
&&\|H_3\|\leq 2 \delta^{-2h} \sum_{h+1 \leq j_1 < j_2 \leq n} (\delta r)^{j_1+j_2},\quad  
\|H_4\| \leq 2 \epsilon^k \delta^{-h} \sum_{j_1=0}^{k-1} \sum_{j_2=h+1}^n (r/\epsilon)^{j_1} (r \delta)^{j_2}.
\end{eqnarray*}
Then we have
\[
d_2 + d_3 + \sum_{i=2}^4 \|H_i\| \leq \big( \epsilon^k \sum_{j=0}^{k-1} (r/\epsilon)^j
+ \delta^{-h} \sum_{j=h+1}^n (\delta r)^j\big)^2 \enspace.
\]
The proof is achieved by computing the right hand side of the above inequality.
\end{enumerate}
\end{proof}

\begin{lemma}
 \label{lem:princ1}
For any edge $i=1,\dots,q-1$ of the Newton polygon,
if $\delta=r_i/r_{i+1}<1/f$, where $12.11<f<12.12$, then $\tilde{A}(y)$ and $P(y)$
have the same number of eigenvalues in the disk centered at zero and with radius $g$, where 
$4.371<g<4.372$. 
For the last edge of the Newton polygon where $i=q$, $\tilde{A}(y)$ and $P(y)$ have the same number of eigenvalues 
in the disk centered at zero with radius $g>2+\sqrt{2}$.
\end{lemma}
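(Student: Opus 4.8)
The plan is to combine the generalized Rouch\'e theorem (Theorem~\ref{rthm}) with the estimates collected in Lemma~\ref{lem4}. Recall that, by Lemma~\ref{lem4}, part~1, on the circle $|y|=r$ we have
\[
P(y)^* P(y) - Q(y)^* Q(y) = D_1(y) + H_1(y) - D_2(y) - D_3(y) - H_2(y) - H_3(y) - H_4(y),
\]
and this matrix is Hermitian and a scalar multiple of $I_m$ plus the Hermitian perturbations $H_j$. To guarantee $P(y)^* P(y)-Q(y)^* Q(y)\succ 0$ it therefore suffices to show
\[
d_1 - \|H_1\| - d_2 - d_3 - \sum_{i=2}^4 \|H_i\| > 0.
\]
So the first step is to substitute the bounds of Lemma~\ref{lem4}, parts 2--5, into this scalar inequality, obtaining for $r>1$ a sufficient condition of the form
\[
r^{2h} \frac{r^2-2+r^{-2\ell}}{r^2-1} - 2 r^{k+1} \frac{r^{2h-k+1}-r^{h+1}-r^h+r^k}{(r^2-1)(r-1)} - \Big(\epsilon\frac{\epsilon^k-r^k}{\epsilon-r} + \delta r^{h+1}\frac{1-(r\delta)^{n-h}}{1-r\delta}\Big)^2 > 0,
\]
and an analogous inequality for $r<1$ using part~3 instead of part~2.

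The second step is to make this condition uniform in the ``nuisance'' exponents $k$, $h$, $\ell=h-k$, $n$ by passing to the worst case. Since $0\le\epsilon,\delta<1$ and we will evaluate at a fixed $r=g>2$, the geometric sums can be bounded by their infinite-series limits (for the $Q$-part) and the $D_1$, $H_1$ terms can be handled by monotonicity in $\ell$: one checks that the left-hand side is minimized (over the admissible ranges) in a limiting regime, e.g.\ $\ell\to\infty$ or $\ell$ fixed small, and after dividing through by a suitable power of $r$ one is left with an inequality in the two real variables $r$ and $\max(\epsilon,\delta)$ alone. Using $\epsilon,\delta<1/f$, it then reduces to finding $r$ in some interval for which
\[
\phi_f(r) := \text{(cleaned-up lower bound)} > 0,
\]
and symmetrically a version $\psi_f(r)>0$ for $r<1$.

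The third step is the calibration of the constants: one wants a single threshold $f$ (with $12.11<f<12.12$) and a single radius $g$ (with $4.371<g<4.372$) such that $\phi_f(g)>0$ while $\psi_f(1/g)>0$, i.e.\ the inequality holds simultaneously on $|y|=g$ and on $|y|=1/g$. These numeric windows are exactly what force the somewhat unusual constants: one solves (numerically, to the stated precision) for the boundary case where the two inequalities become tight, and verifies the strict inequality on the open windows. Once $P(y)^* P(y)-Q(y)^* Q(y)\succ 0$ holds on both circles $|y|=g$ and $|y|=1/g$, Theorem~\ref{rthm} applied with $\Gamma$ the circle of radius $g$ (resp.\ $1/g$) shows that $\det(P(y)+Q(y))=\det\tilde A(y)$ and $\det P(y)$ have the same number of roots inside; subtracting, $\tilde A(y)$ and $P(y)$ have the same number of eigenvalues in the annulus $\mathcal A(1/g,g)$. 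For the interior edges $1<i<q$ both $\epsilon=r_{i-1}/r_i$ and $\delta=r_i/r_{i+1}$ are available and $<1/f$, so one uses radius $g$ on the outside and $1/g$ on the inside. For the first edge $i=1$ there is no $r_0$, so the $d_2,\|H_2\|,\|H_4\|$ terms (the ``$\epsilon$-part'') must be estimated directly from Lemma~\ref{lem3} with $k=0$ (only the constant term $\tilde\sigma_0\le 1$ survives at $j<k$, i.e.\ that sum is empty), which loosens the inner radius to $g'=2+\sqrt2$; symmetrically for $i=q$ the outer radius becomes $g'$.

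The main obstacle is the second step: the lower bound coming from Lemma~\ref{lem4} is a genuinely messy rational function of $r$ with parameters $k,h,n$, and one must argue carefully that the stated numeric windows for $f$ and $g$ really do make it positive \emph{for all} admissible parameter values, not just generically. In particular one must check that the worst case over $\ell=h-k\ge 1$ and over $n-h\ge 0$ (and $k\ge 0$) is correctly identified — the temptation to simply let all tails run to their geometric-series limits can be too lossy to close the gap at the claimed precision, so some of the finite sums may need to be kept. This is where Lemma~\ref{lem:princ1} will do the real work, and the rest of Theorem~\ref{shaext} then follows by also invoking Lemma~\ref{cor2} to locate the $m(h-k)=mm_i$ eigenvalues of $P(y)$ (hence of $\tilde A(y)$, hence of $A(x)$ after rescaling by $r_i$) in the claimed annulus.
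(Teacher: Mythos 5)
Your skeleton (reduce positive definiteness on a circle to the scalar inequality $d_1-\|H_1\| > d_2+d_3+\sum_{i=2}^4\|H_i\|$ from Lemma~\ref{lem4}, then apply Theorem~\ref{rthm}) is the paper's, but the step you defer as ``the main obstacle'' is exactly the content of the lemma, and the route you sketch around it would not prove the statement as written. The paper performs no simultaneous numerical calibration on the two circles $|y|=g$ and $|y|=1/g$, and it does not need $\epsilon<1/f$ here: on the outer circle ($r>1$) it only uses $\epsilon<1$, which is automatic, because the whole $\epsilon$-tail is damped by $r^{-\ell}$; the dependence on $k,h,\ell,n$ is then removed not by a limiting worst-case argument but by the concrete chain $\delta r<1$, then $\delta\le\frac{1}{2r}$ together with $\ell\ge1$, which makes the correction term nonnegative via $(r^{\ell+1}-1)(1-2\delta r)\ge0$, collapsing the sufficient condition to \eqref{easyone}, i.e.\ the quadratic $p(\delta)=r^2(1-2r)\delta^2-2r(r^2-4r+2)\delta+(r^2-4r+2)>0$. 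The constants then come from a closed-form one-variable optimization: $\delta_+(r)$ in \eqref{eq:deltaf} is maximized at $r_0=\frac{3+\sqrt3}{2}+\sqrt{2+\frac{7\sqrt3}{6}}\simeq4.3712$ with value $\simeq1/12.1136$, and this single computation produces both $f$ and $g$. Your plan instead requires $\max(\epsilon,\delta)<1/f$ on the outer circle, which proves a lemma with a strictly stronger hypothesis than the stated one (only $\delta<1/f$ is assumed), and the inner circle is handled by the separate, symmetric Lemma~\ref{lem:princ2} via $\rho=1/r$, not by a joint two-circle fit. As it stands, the decisive reduction and the derivation of the numeric windows are missing, so the proposal is an outline rather than a proof.

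The exceptional-edge part is also misassigned. In this lemma the special case is the \emph{last} edge $i=q$: there $h=n$, so $D_3,H_3,H_4$ vanish, and the remaining inequality $r^2-4r+2+2r^{-\ell+1}-2r^{-2\ell}>0$ holds for every $r>2+\sqrt2$, so the disk radius may be taken anywhere above $2+\sqrt2$ --- a sharper outer bound, not a loosened one. Your discussion of $i=1$ with $k=0$ (the vanishing $\epsilon$-part and the radius $g'=2+\sqrt2$ on the inner side) belongs to Lemma~\ref{lem:princ2} and to part 2 of Theorem~\ref{shaext}, not to this lemma, and since $2+\sqrt2<g$ the radius $g'$ tightens the resulting annulus rather than loosening it.
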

\begin{proof}
Along the proof we assume that $r>1$.
Using Lemma \ref{lem4}, a sufficient condition for $[P(y)]^* P(y) - [Q(y)]^* Q(y)\succ 0$ is that
\begin{equation}\label{ustep1}
d_1 > d_2 + d_3 + \rho (-H_1 + H_2 + H_3 + H_4)\enspace.
\end{equation}
Since $\rho (-H_1 + H_2 + H_3 + H_4)\leq \sum_{i=1}^4 \|H_i\|$, ~\eqref{ustep1} is implied by
\begin{equation}\label{ustep2}
d_1-\|H_1\| > d_2 + d_3 + \sum_{i=2}^4 \|H_i\|\enspace.
\end{equation}

Observe that, by Lemma~\ref{lem4},
\[
 d_1-\|H_1\|\geq 
 r^{2 h}\big(
\frac{r^2 - 4r +2}{(r-1)^2} + \frac{2 r^{-\ell+1} -r^{-2\ell}}{(r-1)^2}\big)\enspace.
\]
Thus,~\eqref{ustep2} is deduced from the following inequality:
\begin{equation}\label{ustep3}
\frac{r^2 - 4r +2}{(r-1)^2} + \frac{2 r^{-\ell+1} -r^{-2\ell}}{(r-1)^2} > 
\left[\epsilon \ \frac{r^{-\ell} - \epsilon^kr^{-h}}{r-\epsilon} + 
\delta r \ \frac{1-(\delta r)^{n-h}}{1-\delta r}\right]^2.
\end{equation}

Assume now that $\delta r < 1$; since $\epsilon < 1$ we get
\[
\left[\epsilon \ \frac{r^{-\ell} - \epsilon^{-\ell}}{r-\epsilon} + 
\delta r \ \frac{1-(\delta r)^{n-h}}{1-\delta r}\right]^2 \leq 
\left[\frac{r^{-\ell}}{r-1} + \frac{\delta r}{1 - \delta r}\right]^2\quad \enspace.
\]
Then~\eqref{ustep3} follows from 
\begin{equation}
\frac{r^2 - 4r +2}{(r-1)^2} + \frac{2 r^{-\ell+1} -r^{-2\ell}}{(r-1)^2} >  
\left[\frac{r^{-\ell}}{r-1} + \frac{\delta r}{1 - \delta r}\right]^2 \enspace,
\label{eq:simlified}
\end{equation}
which is equivalent to
\begin{equation}
\frac{r^2 - 4r +2}{(r-1)^2} + 2r^{-2\ell}\frac{r^{\ell+1}-1+\delta r^{\ell+1}-2\delta r^{\ell+2}+\delta r}{(r-1)^2(1-\delta r)}>
(\frac{\delta r}{1 - \delta r})^2 \enspace.
\label{eq:step4}
\end{equation}
Now, assume that $\delta\leq \frac{1}{2r}$;
since $\ell\geq 1$, we have
\[
r^{\ell+1}-1+\delta r^{\ell+1}-2\delta r^{\ell+2}+\delta r\geq r^{\ell+1}-1-2\delta r^{\ell+2}+2\delta r=(r^{\ell+1}-1)(1-2\delta r)\geq 0\enspace.
\]
Then~\eqref{eq:step4} follows from the following inequality:
\begin{equation}\label{easyone}
\frac{r^2 - 4r +2}{(r-1)^2} > \frac{\delta^2 r^2}{(1-\delta r)^2}\enspace,
\end{equation}
which can be written in the polynomial form 
\[
 p(\delta):=a(r) \delta^2 + b(r) \delta + c(r) >0,
\]
where $a(r):=r^2(1-2r)$, $b(r):=-2r c(r)$, $c(r):=r^2-4r+2$.
Note first that $r>1$ implies $a(r)<0$; let us consider  
the discriminant of $p(\delta)$, i.e. $\Delta(r):=[b(r)]^2-4a(r)c(r)=4 r^2 (r-1)^2 c(r)$. We see that $\Delta(r)$ 
cannot be negative, otherwise $ p(\delta)<0 \ \forall r>1$. Thus, it must hold $c(r)>0$, which implies that $r>2+\sqrt{2}$.

We deduce that $p(\delta)>0$ for all the values of $\delta$ which satisfy 
\begin{equation}
\label{eq:deltaf}
 \delta<\delta_+(r):=\frac{-c(r)+(r-1) \sqrt{c(r)}}{r(2r-1)}\enspace.
\end{equation}
The graph of $\delta_+(r)$ is demonstrated in Figure~\ref{fig:delta}. 
\begin{figure}[htbp]
\centering
\includegraphics[scale=0.35]{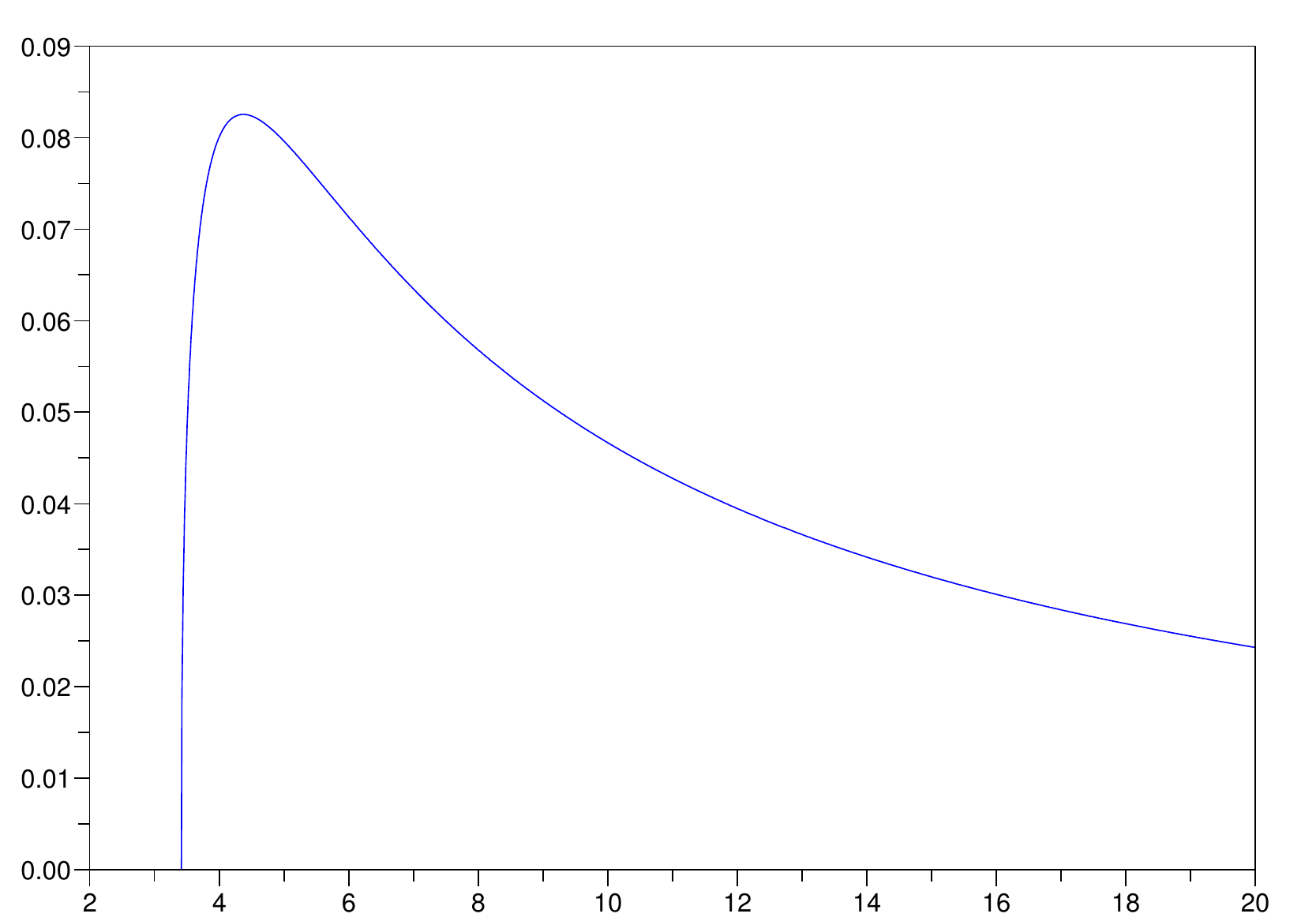}
\caption{The graph of $\delta_+(r)$ as a function of $r$.}
\label{fig:delta}
\end{figure}
The maximum value of $\delta_+(r)$
is  
$\delta_{\max} = \frac{7+3 \sqrt{3}}{2} - \sqrt{18+\frac{21 \sqrt{3}}{2}} \simeq 0.08255 \simeq \frac{1}{12.1136}$
which is obtained at $r=r_0:=\frac{3 +\sqrt{3}}{2} +\sqrt{2+\frac{7 \sqrt{3}}{6}} \simeq 4.3712$. 
So for $\delta<\delta_{\max}$
inequality~\eqref{easyone} holds, which implies that $[P(y)]^* P(y) - [Q(y)]^* Q(y)\succ 0$ on the boundary of a disk of radius
$r_0\simeq4.3712$. Then, by the Rouch\'e theorem (Theorem~\ref{rthm}), $P(y)$ and $\tilde{A}(y)=P(y)+Q(y)$ 
have the same number of zeros inside a disk of radius $r_0$. This completes the proof of the first part of the lemma.

Note now that for the last edge of the Newton polygon, since the terms $D_3,H_3,H_4$ are zero, inequality~\eqref{eq:simlified} becomes
\[
\frac{r^2 - 4r +2}{(r-1)^2} + \frac{2 r^{-\ell+1} -r^{-2\ell}}{(r-1)^2} > 
\left[\frac{r^{-\ell}}{r-1} \right]^2\enspace,
\]
or equivalently, $r^2 - 4r +2+2 r^{-\ell+1} -2r^{-2\ell}>0$
which holds for any $r>2+\sqrt{2}$.
\end{proof}

\begin{lemma}
 \label{lem:princ2}
For any edge $i=2,\dots,q$ of the Newton polygon,
if $\epsilon=r_{i-1}/r_{i}<1/f$, where $12.11<f<12.12$, then $\tilde{A}(y)$ and $P(y)$
have the same number of eigenvalues in the disk centered at zero and with radius $1/g$, where 
$4.371<g<4.372$. 
For the first edge of the Newton polygon where $i=1$, $\tilde{A}(y)$ and $P(y)$ have the same number of eigenvalues 
in the disk centered at zero, with radius $1/g$ where $g>2+\sqrt{2}$.
\end{lemma}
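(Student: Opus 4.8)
The plan is to mirror, for $r:=|y|<1$, the argument used for Lemma~\ref{lem:princ1}; the parameter $\epsilon=r_{i-1}/r_i$ will now take over the role that $\delta$ played there. I would again start from the identity of part~1 of Lemma~\ref{lem4} and observe that a sufficient condition for $[P(y)]^*P(y)-[Q(y)]^*Q(y)\succ 0$ on the circle $|y|=r$ is
\[
d_1-\|H_1\|>d_2+d_3+\sum_{j=2}^4\|H_j\| .
\]
This time the lower estimate for $d_1$ should be taken from part~3 of Lemma~\ref{lem4}, the one valid for $r<1$, while parts~4 and~5, together with Lemma~\ref{lem3}, bound the right-hand side exactly as in Lemma~\ref{lem:princ1}. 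For $r<1$ the right-hand side is dominated by the terms carrying $\epsilon$, namely $d_2$, $\|H_2\|$ and (the pertinent part of) $\|H_4\|$, which is why the separation parameter governing the estimate is $\epsilon$ rather than $\delta$.

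The heart of the matter is then a scalar inequality in $r$ and $\epsilon$ which I expect to be carried, under the substitution $r\mapsto 1/r$, onto the polynomial inequality~\eqref{easyone} already analysed in the proof of Lemma~\ref{lem:princ1} — with $\rho:=1/r>1$ in place of $r$ and $\epsilon$ in place of $\delta$. Conceptually this reflects the reversal symmetry of the construction: passing from $\tilde A(y)$ to $y^n\tilde A(1/y)$ exchanges the coefficients lying below and above the central block $[k,h]$, hence swaps the roles of $\epsilon$ and $\delta$, inverts the nonzero eigenvalues, and leaves the hypothesis of Theorem~\ref{rthm} unchanged because the reversed $P$ and $Q$ get multiplied by the common positive scalar $|y|^{2n}$. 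Granting the reduction, the discussion following~\eqref{easyone} shows that the inequality holds as soon as $\epsilon<\delta_{\max}\simeq 1/12.1136$, with the extremal radius reached at $\rho=r_0\simeq 4.3712$, that is, at $|y|=1/r_0=1/g$ with $4.371<g<4.372$; Theorem~\ref{rthm} would then give that $P(y)$ and $\tilde A(y)=P(y)+Q(y)$ have the same number of eigenvalues in the disk of radius $1/g$.

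For the first edge $i=1$ one has $k=0$, so $D_2$, $H_2$ and $H_4$ disappear and the sufficient inequality should reduce, after $r\mapsto 1/r$, to $\rho^2-4\rho+2+2\rho^{-\ell+1}-2\rho^{-2\ell}>0$, valid for every $\rho>2+\sqrt{2}$, which yields the second assertion with radius $1/g$ and $g>2+\sqrt{2}$.

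The step I expect to be the main obstacle is the bookkeeping behind ``carried onto~\eqref{easyone}'': one has to rederive the analogues of parts~4 and~5 of Lemma~\ref{lem4} for $r<1$, where the finite geometric sums are truncated from the opposite end, and one has to verify that the auxiliary reductions of Lemma~\ref{lem:princ1} — the assumptions $\delta r<1$ and $\delta\le 1/(2r)$ and the monotonicity argument that discards the $r^{-2\ell}$ term — translate under $r\mapsto 1/r$ into the restrictions $\epsilon<r$ and $\epsilon\le r/2$, which are indeed compatible with the regime $\epsilon<\delta_{\max}$ at the radius $|y|=1/g$. Once this is checked, no new idea is needed and the proof should close exactly as that of Lemma~\ref{lem:princ1}.
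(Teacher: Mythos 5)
Your plan is exactly the route the paper takes: it redoes the argument of Lemma~\ref{lem:princ1} for $r=|y|<1$, using part~3 of Lemma~\ref{lem4} for $d_1$, the same bounds on the remaining terms, the auxiliary reductions $\epsilon<r$ and $\epsilon\le r/2$ (which are indeed the translations of $\delta r<1$ and $\delta\le 1/(2r)$, and are compatible with $\epsilon<1/f$ at $|y|=1/g\simeq 0.2288$), and the substitution $\rho=1/r$ carrying the final scalar inequality onto~\eqref{easyone}, with the first edge handled by dropping $D_2,H_2,H_4$ just as you say. So the proposal is correct and essentially the same as the paper's proof; your reversal-symmetry observation about $y^n\tilde A(1/y)$ swapping $\epsilon$ and $\delta$ is a sound consistency check (and could even be used to deduce this lemma directly from Lemma~\ref{lem:princ1}), though the paper simply repeats the computation.
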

\begin{proof}
Along the proof we assume that $r<1$.
We follow a similar argument to the previous lemma: the formulae that we will obtain are akin to those that we had gotten for the case $r>1$. 
We will therefore give fewer details.

Starting from~\eqref{ustep2} we get:
\begin{equation}\label{ustep3bis}
\frac{2 r^2 - 4r + 1}{(1-r)^2} +  \frac{2 r^{l+1} -r^{2l+2}}{(1-r)^2} > 
\left[\epsilon\frac{1 - (\epsilon r^{-1})^{k}}{r - \epsilon} + 
\delta r^{l+1} \ \frac{1-(\delta r)^{n-h}}{1-\delta r}\right]^2,
\end{equation}
which is analogous to~\eqref{ustep3}.
Assume that $\epsilon<r$; we have
\[
\left[\epsilon\frac{1 - (\epsilon r^{-1})^{k}}{r - \epsilon} + 
\delta r^{l+1} \ \frac{1-(\delta r)^{n-h}}{1-\delta r}\right]^2 \leq 
\left[\frac{\epsilon}{r - \epsilon} + \frac{\delta r^{l+1}}{1-\delta r}
\right]^2
\leq
\left[\frac{\epsilon}{r - \epsilon} + \frac{r^{l+1}}{1-r}
\right]^2\enspace.
\]
Thus, ~\eqref{ustep3bis} is deduced from the following inequality:
\begin{equation}
\label{eq:simplifiedrs}
\frac{2 r^2 - 4r + 1}{(1-r)^2} +  \frac{2 r^{l+1} -r^{2l+2}}{(1-r)^2} > 
\left[\frac{\epsilon}{r - \epsilon} + \frac{r^{l+1}}{1-r}
\right]^2\enspace,
\end{equation}
which is equivalent to
\begin{equation}
 \label{eq:step4rb}
\frac{2 r^2 - 4r + 1}{(1-r)^2} + 2r^{l+1}\frac{r-r^{l+2}-2\epsilon+\epsilon r^{l+1}+\epsilon r}{(1-r)^2(r-\epsilon)}
>\frac{\epsilon^2}{(r - \epsilon)^2}\enspace.
\end{equation}
Now assume that $\epsilon<\frac{r}{2}$; we have
\[
r-r^{l+2}-2\epsilon+\epsilon r^{l+1}+\epsilon r\geq r-r^{l+2}+2\epsilon r^{l+1}-2\epsilon=(1-r^{\ell+1})(r-2\epsilon)>0\enspace.
\]
Thus,
~\eqref{eq:step4rb} is deduced from
\[
\frac{2 r^2 - 4r + 1}{(1-r)^2}>  
\frac{\epsilon^2}{(r - \epsilon)^2}\enspace.
\]
Setting $\rho=r^{-1}$ we get inequality~\eqref{easyone} and, following the arguments already used in the proof of Lemma~\ref{lem:princ1},
we find a maximal value equal to 
$\epsilon_{\max}= \frac{7+3 \sqrt{3}}{2} - \sqrt{18+\frac{21 \sqrt{3}}{2}} \simeq 0.08255 \simeq \frac{1}{12.1136}$, 
which is obtained at $r=r_0^{-1}=\frac{3 -\sqrt{3+2\sqrt{3}}}{2} \simeq 0.22877 \simeq \frac{1}{4.3712}$. 
For $\epsilon<\epsilon_{\max}$,
$[P(y)]^* P(y) - [Q(y)]^* Q(y)\succ 0$ on the boundary of a disk of radius
$r_0^{-1}$.

Concerning the first edge of the Newton polygon, since the terms $D_2,H_2,H_4$ are zero, ~\eqref{eq:simplifiedrs} is replaced with
\[
\frac{2 r^2 - 4r + 1}{(1-r)^2} +  \frac{2 r^{l+1} -r^{2l+2}}{(1-r)^2} > 
\left[\frac{r^{l+1}}{1-r}\right]^2\enspace,
\]
or equivalently, $2 r^2 - 4r + 1+ 2 r^{l+1} -2r^{2l+2}>0$,
which holds for any $r<1-\frac{\sqrt{2}}{2}$.
\end{proof}

\begin{proof}[Proof of Theorem~\ref{shaext}]
 By Lemmas~\ref{cor2},~\ref{lem:princ1},~\ref{lem:princ2}, $\tilde{A}(y)$ has 
$m(h-k)$ eigenvalues lying in the annulus $\mathcal{A}(1/g,g)$, where $4.371<g<4.372$.
This fact implies that $A(x)$ has $m(h-k)$ eigenvalues in the annulus $\mathcal{A}(r_i/g,gr_i)$,
where $r_i$ denotes the tropical root corresponding to the $i$th edge of Newton polygon.
\end{proof}

\begin{remark}\rm
The value of $g$ that we obtained in Lemmas~\ref{lem:princ1} and~\ref{lem:princ2} yields uniform bounds, 
independent of the exact values of $\delta$ 
and $\epsilon$. 
Yet, it is possible to get tighter bounds if either $\delta$ or $\epsilon$ are smaller than the threshold value $1/f$. 
More precisely, due to inequality~\eqref{eq:deltaf}, the
condition to be satisfied is
\[
-c(r)+(r-1) \sqrt{c(r)}-\delta(2r^2-r)>0\enspace. 
\]
Suppose that one has a given value of $\delta$, say $\delta=\delta_0<\delta_{\max}=1/f$. 
One can find the smallest $r > 2+\sqrt{2}$ such that the above inequality holds. Notice that the function $\delta_+(r)$, defined as in 
\eqref{eq:deltaf}, is increasing on the interval $(2+\sqrt{2},r_0]$. Therefore, given any $\delta_0 < 1/f$ there is a 
unique optimal radius $r=\hat{r}$ satisfying $\delta_+(\hat{r})=\delta_0$.

As an example, when $\delta=0.05$, the smallest $r$ which satisfies the above inequality is $r\simeq 3.5142$, while for
$\delta=0.01$ the smallest $r$ is $r\simeq 3.4168$ which is very close to $2+\sqrt{2}$. 
Following symmetric arguments, one 
one can show that when $\epsilon$ is much smaller than   the threshold value $\epsilon_{\max}=1/f$, the bound  on the inner radius of the annulus can be improved.
In this way, when either $\delta$ or $\epsilon$ are smaller than $1/f$, Theorem~\ref{shaext} can be modified accordingly,
providing sharper bounds for the eigenvalues of $A(x)$.
\end{remark}

\section{Numerical experiments}\label{exp}
We have created a Matlab function which implements the technique of
choosing initial approximations to the eigenvalues of a matrix
polynomial based on the results of Theorem \ref{th6}. Even though this
function is designed to deal with polynomials in the class $\mathcal
Q_{m,n}$, it can be applied to any matrix polynomial in $\mathcal
P_{m,n}$. The function works in this way: the coefficient of the
polynomial $w(x)=\sum_{i=0}^n\|A_i\|x^i$ are computed together with
the associated Newton polygon which provides the values $k_i$ and
$r_i=v_{k_i}$, $i=1,\ldots,q$.  Then $m(k_{i+1}-k_i)$ initial
approximations are uniformly placed in the circle of center 0
and radius $r_i$, for $i=1,\ldots,q$.

We have also implemented the 
Ehrlich Aberth iteration applied to the polynomial $a(x)=\det A(x)$ defined by
\begin{equation}\label{ea}
\begin{split}
&x^{(\nu+1)}_i=x^{(\nu)}_i-\frac{N(x^{(\nu)}_i)}{1-N(x^{(\nu)}_i)\sum_{j=1,\,j\ne i}^n\frac 1{x^{(\nu)}_i-x^{(\nu)}_j}},\quad i=1,\ldots,mn,\quad \nu=0,1,\ldots,\\
&N(x)=a(x)/a'(x)
\end{split}
\end{equation}
starting from the initial approximations $x^{(0)}_i$, $i=1,\ldots,mn$,
where the Newton correction $N(x)$ is computed with the formula $N(x)=
1/\hbox{trace}(A(x)^{-1}A'(x))$.  In this implementation, the
iteration \eqref{ea} is applied only to the components $x^{(\nu)}_i$
for which the numerical convergence has not occurred yet.  We
say that $x^{(\nu)}_i$ is numerically converged if either the Newton
correction is relatively small, i.e., if $|N(x^{(\nu)}_i)|\le\epsilon
|x^{(\nu)}_i|$ for a suitable $\epsilon>0$ close to the machine
precision, or if the reciprocal of the condition number of
$A(x^{(\nu)}_i)$ is smaller than a given $\delta>0$ close to the
machine precision.  The execution is halted if either $\nu=5000$ or if
all the components $x^{(\nu)}_i$ have arrived at numerical
convergence.

Observe that each component $x^{(\nu)}_i$ may converge with a number
of iterations depending on $i$ so that each simultaneous iteration in
\eqref{ea} does not have the same computational cost. In fact, while
in the initial steps all the components in \eqref{ea} must be updated,
in the subsequent steps, when most of the components have arrived at
convergence, only a few components $x^{(\nu)}_i$ must be updated. For
this reason, it is not fair to compare performances by relying only on
the maximum value reached by the parameter $\nu$ which counts the
number of simultaneous iterations.

Therefore, in our experiments besides counting the maximum number of
simultaneous iterations {\tt simul\_it}, that is, the maximum value
reached by $\nu$, we have taken into account the average number of
iteration per component {\tt aver\_it}, given by ${\tt aver\_it}=\frac
1{mn}\sum_{i=1}^{mn} \nu_i$, where $\nu_i$ is the number of steps
needed for convergence of the $i$th component $x^{(\nu)}_i$. The
quantity {\tt aver\_it} is more meaningful and represents the number
of simultaneous iterations that one would obtain if all the components
require the same number of iteration to arrive at convergence.
 The value of {\tt simul\_it} might be meaningful in a parallel
environment where each processor can execute the iteration on a given component
$x_i$.

We have computed the values of {\tt simul\_it} and {\tt aver\_it} obtained 
by applying the Ehrlich-Aberth iteration starting with initial approximations $x^{(0)}_i$ uniformly placed along the unit circle and with initial approximations placed according to our criterion.

The first set of experiments concerns matrix polynomials in the class
$\mathcal Q_{m,n}$, i.e., polynomials with coefficients $A_i=\sigma_i
Q_i$ with $Q_i^*Q_i=I$ and $\sigma_i\ge 0$. The matrices $Q_i$ have
been chosen as the orthogonal factors of the QR factorization of 
randomly generated
matrices. Concerning the scalars $\sigma_i$ we have set
\[\begin{split}
\sigma=&[{\tt 1,~ 3.e5,~ 3.e10,~ 1.e15,~ 0,~ 0,~ 
0,~ 0,~ 0,~ 1.e40,~ 0,~  0,~ 0,~ 1]}
\end{split}
\]
so that $A(x)$ is a polynomial of degree 13 with eigenvalues of unbalanced
moduli.  We have chosen different values for $m$, more precisely,
$m=5,10,20,40$. Table \ref{tab1} reports the number of iterations. It
is interesting to point out that the reduction factor in the number of
average and simultaneous iterations is quite large and grows almost 
linearly with the size $m$.

\begin{table}
\begin{center}
\begin{tabular}{|l|ll|ll|}
\hline
& \multicolumn{2}{c|} {Newton polygon}
& \multicolumn{2}{c|} {Unit circle}
\\
$m$ & {\tt simul\_it} & {\tt aver\_it} 
  & {\tt simul\_it} & {\tt aver\_it} 
\\
\hline
5 &\tt 8   &\tt 5.4       &
   \tt 243  &\tt 191  
\\
\hline
10 &\tt 9   &\tt 5.5       &
   \tt 444  &\tt 375 
\\
\hline
20 &\tt 11  &\tt  5.6     &
   \tt 855  &\tt  738
\\
\hline
40 &\tt 13  &\tt 6.1      &
   \tt 1594  &\tt 1466  
\\
\hline
\end{tabular}\caption{Number of simultaneous iterations and average 
iterations needed by the Ehrlich-Aberth iteration by choosing initial
approximations on the unit circle or by using the Newton polygon
technique. Polynomial with orthogonal coefficients scaled with
constants $(\sigma_i)=[{\tt 1,3.e5, 3.e10, 1.e15, 0, 0, 0, 0, 0,
    1.e40, 0, 0, 0, 1]}$.}\label{tab1}
\end{center}
\end{table}

We have also applied this technique to polynomials with randomly
generated coefficients, which are not generally orthogonal, scaled by
the same factors $\sigma_i$. The results are reported in Table
\ref{tab2}. We may observe that the behavior is almost the same: the
proposed strategy for choosing initial approximations still leads to a
substantial decrease of the number of iterations.  It must be said
that in the test performed, the condition numbers of the block
coefficients corresponding to the vertices of the Newton polygon is
not very large, the largest value encountered was around {\tt 5.0e3}.

\begin{table}
\begin{center}
\begin{tabular}{|l|ll|ll|}
\hline
& \multicolumn{2}{c|} {Newton polygon}
& \multicolumn{2}{c|} {Unit circle}
\\
$m$ & {\tt simul\_it} & {\tt aver\_it} 
  & {\tt simul\_it} & {\tt aver\_it} 
\\
\hline
5 &\tt 9   &\tt 6.8       &
   \tt 240  &\tt 190  
\\
\hline
10 &\tt 13   &\tt 7.7       &
   \tt 457  &\tt 372 
\\
\hline
20 &\tt 16  &\tt  9     &
   \tt 851  &\tt  732
\\
\hline
40 &\tt 16  &\tt 10.4      &
   \tt 1597  &\tt 1457  
\\
\hline
\end{tabular}\caption{Number of simultaneous iterations and average 
iterations needed by the Ehrlich-Aberth iteration by choosing initial
approximations on the unit circle or by using the Newton polygon
technique. Polynomial with random coefficients scaled by constants
$(\sigma_i)=[{\tt 1,3.e5, 3.e10, 1.e15, 0, 0, 0, 0, 0, 1.e40, 0, 0, 0,
    1]}$.}\label{tab2}
\end{center}
\end{table}

\section{Conclusions}
Some known results valid for estimating the moduli of the roots of a
polynomial have been extended to the case of matrix polynomials. These
results have been applied to design a polynomial eigenvalue solver
based on the Ehrlich-Aberth iteration. We have shown the effectiveness
of our approach by means of numerical experiments. We plan to exploit
these results in order to arrive at the implementation of a
multiprecision matrix polynomial root-finder analogous to MPSolve
\cite{bf00}.

\section*{Acknowledgment}The first author wishes to thank Aaron Melman for pointing out reference \cite{gs71} and for illuminating discussions.


\end{document}